\documentclass[12pt, a4paper]{amsart}

\makeatletter
\@namedef{subjclassname@2010}{%
  \textup{2010} Mathematics Subject Classification}
\makeatother

\textheight=23cm
\textwidth=13.5cm
\hoffset=-1cm
\parindent=16pt
 \usepackage{verbatim}
\usepackage{graphicx}
\usepackage{amsmath}            % ..?.. %\usepackage[intlimits]{amsmath}
\usepackage{amsfonts}           % ..?
\usepackage{amssymb}            % multline, gather, split, align, falign,   alignat
\usepackage{xcolor}             % black, lightgray, darkgray, red, blue, yellow,  green, cyan, magenta, orange, violet, purple
\usepackage{hyperref}

\newtheorem{thm}{Theorem}[section]
 
 \newtheorem{lem}[thm]{Lemma}
 \newtheorem{prop}[thm]{Proposition}
 \newtheorem{oprob}[thm]{Open Problem}
 \theoremstyle{definition}
 \newtheorem{defn}[thm]{Definition}
 \theoremstyle{remark}
 \newtheorem{rem}[thm]{Remark}
 \newtheorem{ex}[thm]{Example}
 \numberwithin{equation}{section}

%------------MATH COMMANDS
\newcommand{\psh}{plurisubharmonic }
\newcommand{\om}{\ensuremath{\Omega}}
\newcommand{\cn}{\ensuremath{\mathbb{C}^n}}

\newcommand{\p}{\ensuremath{\partial}}
\newcommand{\eps}{\ensuremath{\varepsilon}}

\newcommand{\mam}{Monge-{A}mp\`ere measure }

\DeclareMathOperator{\supp}{supp}
\DeclareMathOperator{\e}{e}
\DeclareMathOperator{\inte}{int}
%-----------

\begin{document}
 \baselineskip=17pt
\title{Supports of Weighted Equilibrium Measures and Examples}
\subjclass[2010]{32U15, 32W20 }%
\keywords{Weighted Pluripotential Theory, Extremal Measures}%
\address{Johns Hopkins University, Baltimore, MD,21218 USA}
\email{malan@math.jhu.edu}

\author{Muhammed Al\.I    ALAN}

\thanks{This paper is part of the thesis of the author.}

%\date{10 February 2009}

%\large
\begin{abstract}
We analyze the supports of  weighted equilibrium measures in \cn.
We  give explicit examples of families of compact sets which arise as the support
of a weighted equilibrium measure for some admissible weight $w$.
These examples also give new constructions of plurisubharmonic functions in the Lelong class.
We also include a list of open problems on the support of extremal measures which are related to solutions of Monge-{A}mp\`ere equations.
\end{abstract}
\maketitle
\section{Introduction and Background}

Determining  the support $S_w$  of the \mam of a weighted  extremal function $V_{K,Q}$ is  important for many reasons.  Firstly, the
domination principle (Theorem \ref{DominationPrinciple}) shows the importance of the support of the \mam of a weighted
extremal function. Namely,  we have $V_{K,Q}=V_{S_w,{Q|{S_w}}}$.
 Secondly, for a continuous
continuous weight $w$ on a closed set $K$, the weighted supremum
norm of a polynomial $p_d$ of degree less than or equal to $d$ is attained on the support of the  weighted extremal
measure i.e.,  $\|w^d p_d\|_K=\|w^d p_d\|_{S_w}$.

Next, a weighted approximation is possible  only on the support of
the \mam of the weighted  extremal function. Namely, if $f \in \mathcal{C}(K)$, then there exists a weighted sequence
$(w^d p_d)_{d=1}^\infty $ converging uniformly to $f$ on $K$ only if $f(z)=0$ for all $z\in K\setminus S_w$.
Some references for the weighted approximation are \cite{CallaghanThesis},  \cite{Saff-Totik} and \cite{TotikVarju}.

In one variable, there is a notion of logarithmic capacity associated to a compact set $K$.
In order to estimate the weighted  analogue of the capacity with respect to an admissible weight $w$ on a compact set $K$ numerically, it is useful to determine the support of the
weighted extremal measure.  Using the fact that the weighted capacity of a closed
set is equal to the weighted capacity of the support of the weighted extremal measure,
one may reduce the  calculation time remarkably, e.g., compare Table 4 and Table 5 in \cite{RajonRansfordRostand}. See \cite{RajonRansfordRostand},
  \cite{Saff-Totik} for
 further discussion of weighted capacities.

This paper has two motivations. The first motivation is to further analyze the supports of weighted equilibrium measures and to extend the
results from $n=1$ and from the unweighted setting to the weighted setting in higher dimensions.
%Moreover, we  exhibit the properties which do not extend to the unweighted setting.
The second one is to provide families of examples of compact sets which arise as supports of weighted equilibrium measures.
Furthermore, some of these examples
give  new ways of constructing  plurisubharmonic functions in the Lelong class.

In the second section, we recall some of the basic definitions and necessary facts from weighted and classical pluripotential theories.
We also demonstrate by a counterexample that in the weighted setting, unlike the
unweighted setting, the weighted extremal measure and the weighted relative extremal measures are not necessarily mutually absolutely continuous.
In the third section, we give several families of examples of compact sets that are supports of weighted extremal measures. In particular, to
construct the radial extremal functions we survey  Persson's representation
of radial plurisubharmonic  functions in terms of their Monge-{A}mp\`ere measures. In Theorem \ref{SupportStrPsConvex}, which is the main theorem and the deepest result, we  construct extremal
functions whose Monge-{A}mp\`ere measures are supported on  the closure of strictly
pseudoconvex domains.
In the last section, we discuss  some open problems related to the subject.

\section{Weighted Pluripotential Theory}\label{WeightedPT}

We recall basic definitions and facts from  weighted and classical pluripotential
theories. We refer to Saff and Totik's book
\cite{Saff-Totik} for  $n=1$ and Thomas Bloom's Appendix B of
\cite{Saff-Totik} for $n>1$.

Let $K$ be a closed subset of \cn.  A function $w:K\to [0,\infty)$ is called an
\textbf{admissible weight function} on $K$ if
\begin{itemize}
\item[i)] $w$ is upper semicontinuous.
\item[ii)] The set $\{z \in K \mid  w(z) >0 \}$  is not pluripolar.
\item[iii)] If $K$ is unbounded,   $|z|w(z)\to 0 $ as  $|z|\to \infty,\,z\in K$.
\end{itemize}
We define  $Q =  - \log w $. We  use $Q$ and $w$ interchangeably.
\begin{comment}The use of $w$ is more appropriate while studying
 weighted approximation and weighted capacities.
In this paper,  we are mostly dealing with extremal functions. Hence, we will use $Q$ almost always and use of $w$ is redundant.
However, we will keep the notations with $w$, to not change the standard notations of weighted pluripotential theory.
\end{comment}

The \textbf{weighted  extremal function} of $K$ with respect to $Q$  is defined as
\begin{equation}
V_{K,Q}(z):= \sup\left\{ u(z) \mid  u\in L, \, u\leq Q \text{ on } K\right\},
\end{equation}
where the Lelong class $L$ is defined as
\begin{equation}
 L:=\{ u\mid  u \in PSH(\cn), \, u(z) \leq \log^+ |z| +C \},
\end{equation}
where $C$ depends on $u$.  Here the set of all plurisubharmonic functions on
a domain $\om$ is denoted by $PSH(\om)$. If $Q=0$, then we call it the \textbf{(unweighted) extremal function} of $K$  and denote it by $V_{K}$.
A compact set $K$ is called \textbf{regular} if $V_{K}$ is continuous. If $K\cap \overline{B(z,r)}$ is regular for all $z\in K$ and $r>0$,
the set $K$
is called \textbf{locally regular}. Here, $B(z,r)$ denotes the open ball of radius $r$ and center $z$.

We note  that the  \textbf{upper semicontinuous regularization} of a function $v$ is defined by $v^\ast(z):=\limsup\limits_{w\to z}v(w)$ and it is well known that the upper semicontinuous regularization of $V_{K,Q}$ is plurisubharmonic and in $L^+$ where
$$L^+ := \{u \in L \mid  \log^+ |z| +C \leq  u(z) \},$$where $C$ depends on $u$.

We recall that  $dd^c u= 2 i \p \bar{\p} u $ and $(dd^c u)^n$   is the \textbf{complex Monge-{A}mp\`ere operator} defined  by $(dd^c
u)^n=dd^c u \wedge \dots \wedge dd^c u$ for plurisubharmonic functions which are $\mathcal{C}^2$.
If $u$ is   a locally bounded plurisubharmonic function, then  $(dd^c u)^n$ is defined as a positive measure. See \cite{Klimek} for the details. It is also well known that $\int_ {\cn} (dd^c u)^n=(2\pi)^n$ for
all $u\in L^+$.

A set $E$ is called \textbf{pluripolar} if $E\subset \{z\in \cn \mid
u(z)=-\infty\}$ for some plurisubharmonic function $u$. If a
property holds everywhere except on a pluripolar set we will say that
the property holds \textbf{quasi everywhere}.
It is a well-known fact that $V_{K,Q}=V_{K,Q}^\ast$ quasi everywhere.

We denote the support of  $(dd^cV_{K,Q}^\ast)^n$ by  $S_w$.
The following lemma is very useful to determine the supports of Monge-{A}mp\`ere measures.

\begin{lem}\label{SuppprtOfExtremalMeasureSubsetSw} \cite[Appendix B, Lemma 2.3]{Saff-Totik}
Let $S_w^\ast:= \{z\in \cn \mid  V_{K,Q}^\ast(z)\geq Q(z) \}$.
Then, we have $S_w\subset S_w^\ast$.
\end{lem}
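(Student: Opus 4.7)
The plan is to prove the contrapositive: assuming $V_{K,Q}^\ast(z_0) < Q(z_0)$, I will show that $(dd^c V_{K,Q}^\ast)^n$ vanishes on a neighborhood of $z_0$, so $z_0 \notin S_w$. Because $w$ is upper semicontinuous, $Q = -\log w$ is lower semicontinuous, and together with the upper semicontinuity of $V_{K,Q}^\ast$ this makes $U := \{z \in \cn : V_{K,Q}^\ast(z) < Q(z)\}$ open; this set is precisely $\cn \setminus S_w^\ast$.

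Fix $z_0 \in U$ and set $\beta := Q(z_0) - V_{K,Q}^\ast(z_0) > 0$. By the two semicontinuity statements I can choose a closed ball $\bar{B} = \bar{B}(z_0,r) \subset U$ small enough that $V_{K,Q}^\ast \leq V_{K,Q}^\ast(z_0) + \beta/4$ and $Q \geq Q(z_0) - \beta/4$ pointwise on $\bar{B}$. On $B$ I balayage $V_{K,Q}^\ast$: let $\tilde{u}$ be the upper semicontinuous regularization of
\[ z \mapsto \sup\bigl\{v(z) : v \in PSH(B),\ \limsup_{w \to \zeta} v(w) \leq V_{K,Q}^\ast(\zeta)\ \forall\, \zeta \in \p B\bigr\}. \]
Standard Perron--Bremermann theory gives $\tilde{u} \in PSH(B)$, $\tilde{u} \geq V_{K,Q}^\ast$ on $B$ (as $V_{K,Q}^\ast|_B$ is itself a competitor), $\limsup_{z \to \zeta} \tilde{u}(z) \leq V_{K,Q}^\ast(\zeta)$ for each $\zeta \in \p B$, and $(dd^c \tilde{u})^n = 0$ on $B$. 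Gluing,
\[ \hat{u}(z) := \begin{cases} \tilde{u}(z), & z \in B, \\ V_{K,Q}^\ast(z), & z \in \cn \setminus B, \end{cases} \]
is plurisubharmonic on $\cn$ and lies in $L$, because it agrees with $V_{K,Q}^\ast$ outside the compact set $\bar{B}$.

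To close the contradiction I verify $\hat{u} \leq Q$ on $K$. Off $B$ this holds quasi everywhere since $\hat{u} = V_{K,Q}^\ast = V_{K,Q} \leq Q$ there quasi everywhere. On $K \cap B$, the maximum principle applied to $\tilde{u}$ and the choice of $r$ yield
\[ \tilde{u}(z) \leq \max_{\p B} V_{K,Q}^\ast \leq V_{K,Q}^\ast(z_0) + \tfrac{\beta}{4} < Q(z_0) - \tfrac{\beta}{4} \leq Q(z). \]
Hence $\hat{u}$ is an admissible competitor in the supremum defining $V_{K,Q}$, forcing $\hat{u} \leq V_{K,Q}^\ast$ on $\cn$. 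But if $z_0 \in S_w$ then $(dd^c V_{K,Q}^\ast)^n(B) > 0$, so $V_{K,Q}^\ast|_B$ is not maximal, which forces $\tilde{u} > V_{K,Q}^\ast$ at some point of $B$ and therefore $\hat{u} > V_{K,Q}^\ast$ there---a contradiction.

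The main technical obstacle is the pluripolar exceptional set on $K$ where $V_{K,Q}^\ast$ can strictly exceed $Q$, which a priori disqualifies $\hat{u}$ as a literal competitor in the supremum defining $V_{K,Q}$. I would handle this standard nuisance by invoking the fact that modifying a Lelong-class candidate on a pluripolar set leaves its upper semicontinuous regularization unchanged, so the quasi everywhere inequality $\hat{u} \leq Q$ on $K$ still yields $\hat{u} \leq V_{K,Q}^\ast$ globally.
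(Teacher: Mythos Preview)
The paper does not supply its own proof of this lemma; it is quoted from \cite[Appendix B, Lemma 2.3]{Saff-Totik}. Your argument is correct, and it is exactly the Poisson-modification method the paper uses to prove the relative-extremal analogue, Proposition~\ref{RelativeExtremalMeasureonSW}: choose a ball on which $V_{K,Q}^\ast<Q$ by semicontinuity, replace $V_{K,Q}^\ast$ on that ball by its maximal envelope (your Perron--Bremermann balayage is precisely Theorem~\ref{Poisson Modification}), verify the resulting function is still a competitor for the extremal supremum, and conclude that $V_{K,Q}^\ast$ was already maximal on the ball. The pluripolar exceptional set you flag is handled in the paper's proof of the analogue in the same way you propose---implicitly, via the standard fact that a competitor satisfying the constraint quasi everywhere on $K$ is enough.
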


The following theorem  is a very important tool in pluripotential theory and we will use it frequently to determine
weighted extremal functions.

\begin{thm}\label{DominationPrinciple}\textbf{(Domination Principle.)} \cite[Lemma 6.5]{BedfortTaylorPSHFunctionsLogarithmicSingularities} If $u\in L, \, v\in L^+$ and if $u\leq v$ holds  almost everywhere with respect to $(dd^c v)^n$, then $u\leq v \text{ on } \cn$.
\end{thm}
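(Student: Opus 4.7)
The plan is to prove the domination principle by contradiction, using the Bedford--Taylor comparison principle together with the observation that a PSH function whose complex Monge--Amp\`ere measure vanishes on an open set is \emph{maximal} on that set. The growth asymmetry between $u \in L$ and $v \in L^+$ is what lets us confine the set $\{u > v\}$ inside a bounded region.

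After replacing $(u,v)$ by $(u+M, v+M)$ for a sufficiently large constant $M$, I would first reduce to the case $v \geq 0$ on $\cn$ (all hypotheses, conclusions, and class memberships are preserved). Next, for each $\epsilon \in (0,1)$, introduce the shrunken function $u_\epsilon := (1-\epsilon) u \in L$. Using $u(z) \leq \log^+|z| + C_u$ and $v(z) \geq \log^+|z| + C_v$ for suitable constants, one obtains
\begin{equation*}
v(z) - u_\epsilon(z) \;\geq\; \epsilon \log^+|z| + C_v - (1-\epsilon) C_u \;\longrightarrow\; +\infty
\end{equation*}
as $|z| \to \infty$, so $E_\epsilon := \{u_\epsilon > v\}$ is relatively compact in some ball $B_R$ with $v > u_\epsilon$ near $\partial B_R$. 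Moreover, on $E_\epsilon$ one has $0 \leq v < (1-\epsilon)u$, which forces $u > v$; thus $E_\epsilon \subset \{u > v\}$.

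The next step is to apply the Bedford--Taylor comparison principle on $B_R$, after the standard truncation $u \mapsto \max(u,-N)$, $v \mapsto \max(v,-N)$ to reduce to bounded PSH functions and using continuity of the Monge--Amp\`ere operator along decreasing sequences. The boundary inequality $v \geq u_\epsilon$ near $\partial B_R$ yields
\begin{equation*}
\int_{E_\epsilon}(dd^c u_\epsilon)^n \;\leq\; \int_{E_\epsilon}(dd^c v)^n.
\end{equation*}
Since $(dd^c v)^n\bigl(\{u > v\}\bigr) = 0$ by hypothesis and $E_\epsilon \subset \{u > v\}$, the right-hand side vanishes, so $(dd^c u_\epsilon)^n(E_\epsilon) = 0$, i.e., $(dd^c u)^n(E_\epsilon) = 0$.

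The punchline is that the hypothesis also gives $(dd^c v)^n(E_\epsilon) = 0$, so $v$ is \emph{maximal} on the open set $E_\epsilon$ in the Bedford--Taylor sense. Since $u_\epsilon$ is PSH on $E_\epsilon$ and coincides with $v$ on $\partial E_\epsilon$, maximality of $v$ forces $u_\epsilon \leq v$ throughout $E_\epsilon$, which contradicts $u_\epsilon > v$ on $E_\epsilon$ unless $E_\epsilon = \emptyset$. Hence $u_\epsilon \leq v$ on all of $\cn$, and letting $\epsilon \downarrow 0$ produces $u \leq v$. I expect the main obstacle to be technical: ensuring that $E_\epsilon$ can legitimately be treated as open (which requires upper semicontinuous regularizations of $u_\epsilon - v$) and that the maximality criterion applies to the possibly unbounded $v$. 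Both issues are standard in pluripotential theory and are handled by truncation together with the fact that $(dd^c v)^n$ charges no pluripolar set.
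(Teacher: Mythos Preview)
The paper does not give its own proof of this statement; Theorem~\ref{DominationPrinciple} is quoted from Bedford--Taylor \cite[Lemma 6.5]{BedfortTaylorPSHFunctionsLogarithmicSingularities} and used as a black box throughout. So there is no in-paper argument to compare against.

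Your overall strategy---exploit the $L$ versus $L^+$ growth asymmetry to confine $\{u_\epsilon>v\}$ to a bounded region and then appeal to Bedford--Taylor comparison---is exactly the standard route and is in the spirit of the original proof. Two remarks on the write-up, however. First, the comparison-principle step that produces $(dd^c u)^n(E_\epsilon)=0$ is never used afterwards: your ``punchline'' only invokes $(dd^c v)^n(E_\epsilon)=0$, which is immediate from the hypothesis, so the comparison inequality is a detour. Second, the maximality step is more delicate than you suggest. The set $E_\epsilon=\{u_\epsilon>v\}$ need not be open in the Euclidean topology (neither $u_\epsilon-v$ nor $v-u_\epsilon$ is semicontinuous when both functions are merely plurisubharmonic), and even on the interior of $E_\epsilon$ you cannot assert $u_\epsilon\le v$ on the boundary, so the maximal-function argument as written does not close. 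This is not a fatal flaw---the standard resolutions are either to use that $\{u_\epsilon>v\}$ is \emph{plurifinely} open together with the locality of $(dd^c\,\cdot\,)^n$ in the plurifine topology, or to work instead with $w:=\max(u_\epsilon,v)\in L^+$, use that $w=v$ off a compact set so that $\int(dd^c w)^n=\int(dd^c v)^n=(2\pi)^n$, and combine this total-mass identity with the contact-set inequality $\chi_{\{w=v\}}(dd^c v)^n\le\chi_{\{w=v\}}(dd^c w)^n$---but it is a genuine gap, not merely a truncation issue, and your sketch should flag it as such rather than bundling it with the boundedness reduction.
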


\begin{defn}
A plurisubharmonic function $u$ on an open set $\Omega $ is called \textbf{maximal}  if for any
relatively compact open subset $\omega$ of $\Omega $ and any upper semicontinuous
function $v$ defined on $\overline{\omega}$ which is plurisubharmonic on $\omega$ such that
$v \leq u $\; on the boundary of $\omega$, then $v \leq u $ in $\omega$.

\end{defn}

The following theorem characterizes  maximal plurisubharmonic functions in terms of their Monge-{A}mp\`ere measures.

\begin{thm}\label{maximal}\cite[Theorem 4.4.2]{Klimek}
Let \om\, be an open subset of  \,\cn\, and $u$ be a locally bounded plurisubharmonic function on \om.
Then,  $(dd^{c}u)^{n}=0$ if and only if  $u$ is maximal.
\end{thm}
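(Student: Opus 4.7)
The plan is to prove both implications using Bedford--Taylor machinery for the complex Monge-Amp\`ere operator on locally bounded plurisubharmonic functions.

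For $\ma=0 \Rightarrow u$ maximal, I would argue by contradiction. Let $\omega \Subset \om$ be open and let $v$ be upper semicontinuous on $\overline{\omega}$, plurisubharmonic on $\omega$, with $v \leq u$ on $\partial \omega$. Choose $R$ so that $\omega \subset B(0,R)$ and put $v_\eps(z) := v(z) + \eps(|z|^2 - R^2)$. If $v(z_0) > u(z_0)$ at some $z_0 \in \omega$, then for sufficiently small $\eps>0$ one still has $v_\eps(z_0) > u(z_0)$, while $v_\eps \leq u - \delta$ in a neighbourhood of $\partial \omega$. The Bedford--Taylor comparison principle then yields
\[
\int_{\{u < v_\eps\}} (dd^c v_\eps)^n \;\leq\; \int_{\{u < v_\eps\}} \ma \;=\; 0.
\]
On the other hand, $(dd^c v_\eps)^n \geq \eps^n (dd^c |z|^2)^n$ is a positive multiple of Lebesgue measure on the open set $\{u<v_\eps\}$, which has positive Lebesgue measure by upper semicontinuity of $u - v_\eps$---a contradiction.

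For the converse, fix a ball $B$ with $\overline{B} \subset \om$ and consider the Perron--Bremermann envelope
\[
\tilde v(z) := \sup\bigl\{\, \phi(z) : \phi \in PSH(B),\ \limsup_{\zeta \to \eta}\phi(\zeta) \leq u(\eta)\ \forall\, \eta \in \partial B \,\bigr\}.
\]
Since $B$ is strictly pseudoconvex with smooth boundary and $u|_{\partial B}$ is bounded and upper semicontinuous, Bedford--Taylor's solution of the homogeneous Dirichlet problem guarantees that $\tilde v^{\ast} \in PSH(B)$, that $\tilde v^{\ast} \leq u$ on $\partial B$ in the boundary-limit sense, and that $(dd^c \tilde v^{\ast})^n = 0$ in $B$. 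Since $u$ itself lies in the defining family, $u \leq \tilde v^{\ast}$ on $B$. Applying the definition of maximality of $u$ with $\tilde v^{\ast}$ as the test function gives the reverse inequality $\tilde v^{\ast} \leq u$ on $B$. Hence $u \equiv \tilde v^{\ast}$ on $B$, so $\ma = 0$ on $B$. As $B$ was an arbitrary ball with $\overline{B}\subset\om$, we conclude $\ma \equiv 0$ on $\om$.

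The main obstacle is the converse: the required properties of the Perron--Bremermann envelope---that its upper semicontinuous regularization is plurisubharmonic, attains the correct boundary data, and solves the homogeneous complex Monge--Amp\`ere equation---are nontrivial inputs from Bedford--Taylor theory. The forward direction is comparatively standard once the Bedford--Taylor comparison principle is in hand; the genuine work there is the integration-by-parts identities for mixed currents of locally bounded plurisubharmonic functions which underlie that principle.
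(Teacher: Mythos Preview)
The paper does not prove this statement at all: it is quoted as a background result, with the citation \cite[Theorem 4.4.2]{Klimek}, and no argument is given. There is therefore nothing in the paper to compare your proposal against.

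On its own merits, your sketch is essentially the standard proof (and is close to what one finds in Klimek). Two small points deserve attention. In the forward direction, the test function $v$ in the definition of maximality need not be bounded below, so the Bedford--Taylor comparison principle does not apply to $v_\eps$ directly; replace $v$ by $\max\bigl(v,\,\inf_{\overline{\omega}}u-1\bigr)$ first, which is locally bounded, plurisubharmonic, still satisfies $v\le u$ on $\partial\omega$, and still exceeds $u$ at $z_0$. In the converse direction, the Bedford--Taylor solution of the homogeneous Dirichlet problem on the ball is formulated for \emph{continuous} boundary data, whereas $u|_{\partial B}$ is only upper semicontinuous. The clean fix is to choose continuous $\phi_j\searrow u$ on $\partial B$, solve for $v_j\in PSH(B)\cap C(\overline{B})$ with $(dd^c v_j)^n=0$ and $v_j|_{\partial B}=\phi_j$, and set $v:=\lim_j v_j$. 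Monotone continuity of the Monge--Amp\`ere operator gives $(dd^c v)^n=0$; the forward implication (applied to each $v_j$) gives $u\le v_j$, hence $u\le v$; and since $\limsup_{z\to\zeta}v(z)\le\phi_j(\zeta)$ for every $j$, one gets $\limsup_{z\to\zeta}v(z)\le u(\zeta)$ on $\partial B$, so maximality of $u$ yields $v\le u$. Thus $u=v$ on $B$ and $(dd^c u)^n=0$ there. With these adjustments your argument is complete.
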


The following theorem is called \textbf{Poisson Modification} and it is used to modify a locally bounded plurisubharmonic function
in a ball to obtain another locally bounded plurisubharmonic function such that the new function is maximal in the ball
and equal to the original function outside the ball.

\begin{thm}\label{Poisson Modification}\cite[Appendix B, Theorem 1.3]{Saff-Totik}
Let $u\in PSH(\om)\cap  L^\infty_{loc}(\om)$. Let $z_0 \in \om$ and  $R>0$ such that   $\overline{B(z_0,R)}\subset
\om$. Then there exists $\tilde{u}\in PSH(\om)\cap L^\infty_{loc}(\om)$ such that
\begin{itemize}
  \item [i)] $(dd^c \tilde{u})^n =0 $ on $B(z_0,R)$;
  \item [ii)] $\tilde{u}=u$ on $\om \setminus B(z_0,R)$;
  \item [iii)] $\tilde{u}\geq u$ on $B(z_0,R)$.
\end{itemize}
\end{thm}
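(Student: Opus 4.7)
The plan is to solve the homogeneous complex Monge–Amp\`ere Dirichlet problem on $B(z_0,R)$ with boundary values $u|_{\partial B(z_0,R)}$, and then paste the solution onto $u$ outside the ball. The resulting function is the candidate $\tilde u$.

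More precisely, I would first reduce to continuous boundary data. Since $u$ is upper semicontinuous and locally bounded on $\om$, its restriction to the compact set $\partial B(z_0,R)$ is bounded and USC, so it is the pointwise decreasing limit of a sequence $(\varphi_j)$ of continuous functions on $\partial B(z_0,R)$. By the Bedford--Taylor solution of the Dirichlet problem on the ball (a strictly pseudoconvex domain), for each $j$ there exists a unique $h_j \in PSH(B(z_0,R))\cap \mathcal{C}(\overline{B(z_0,R)})$ with $(dd^c h_j)^n = 0$ on $B(z_0,R)$ and $h_j = \varphi_j$ on $\partial B(z_0,R)$. By the comparison principle the $h_j$ decrease, so I set $h := \lim_j h_j$, which is plurisubharmonic on $B(z_0,R)$ and satisfies $(dd^c h)^n = 0$ there. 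Define
\begin{equation*}
\tilde u(z) := \begin{cases} h(z), & z \in B(z_0,R),\\ u(z), & z \in \om \setminus B(z_0,R).\end{cases}
\end{equation*}

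Properties (i) and (ii) hold by construction. For (iii), I would invoke the comparison principle: on $B(z_0,R)$, $u$ is a subsolution with $u \le \varphi_j$ on $\partial B(z_0,R)$ for every $j$, and $h_j$ is the maximal such function, hence $u \le h_j$ on $B(z_0,R)$; letting $j\to\infty$ gives $u \le h = \tilde u$, which also shows local boundedness since $u$ is locally bounded below. The genuinely delicate step is verifying that $\tilde u$ is plurisubharmonic on all of $\om$, i.e.\ that the pasting across $\partial B(z_0,R)$ works. I would argue as follows: for $\zeta \in \partial B(z_0,R)$, since $h \le h_j$ on $B(z_0,R)$ and $h_j$ is continuous at $\zeta$ with $h_j(\zeta)=\varphi_j(\zeta)$, one gets $\limsup_{z\to\zeta,\, z\in B(z_0,R)} h(z) \le \varphi_j(\zeta)$ for every $j$, and letting $j\to\infty$ yields $\limsup_{z\to\zeta} h(z) \le u(\zeta) = \tilde u(\zeta)$. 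Combined with upper semicontinuity of $u$ on $\om\setminus B(z_0,R)$, this shows $\tilde u$ is USC on $\om$, and then the classical gluing lemma (using $h \ge u$ on the ball, so $\tilde u = \max(u, h\cdot\chi_{B(z_0,R)})$ in the appropriate sense) implies $\tilde u \in PSH(\om)$.

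The main obstacle is exactly this boundary matching: solving the Dirichlet problem only with continuous data and decreasing to the USC target $u|_{\partial B(z_0,R)}$ is the cleanest way to avoid having to handle the Perron envelope with merely USC boundary values directly. Everything else (maximality inside the ball, $\tilde u \ge u$, local boundedness) is a standard application of the comparison principle from Bedford--Taylor theory.
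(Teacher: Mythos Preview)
The paper does not supply its own proof of this theorem; it is stated as a quoted result from \cite[Appendix~B, Theorem~1.3]{Saff-Totik} and used as a black box. Hence there is nothing in the paper to compare your argument against.

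Your proposal is the standard construction and is essentially correct. Two minor comments: (a) for property~(i) you should make explicit that $h=\lim_j h_j$ is locally bounded (indeed $u\le h\le h_1$ on the ball), so that Bedford--Taylor continuity of $(dd^c\cdot)^n$ along decreasing sequences of locally bounded plurisubharmonic functions applies and yields $(dd^c h)^n=\lim_j(dd^c h_j)^n=0$; (b) for the inequality $u\le h_j$ it is cleaner to invoke the Perron--Bremermann description of $h_j$ (it is the supremum of all plurisubharmonic $v$ on the ball with $\limsup_{z\to\zeta}v(z)\le\varphi_j(\zeta)$ on the boundary sphere, and $u$ is such a competitor) rather than the comparison principle, which usually refers to the inequality between a sub- and a supersolution of a Monge--Amp\`ere equation. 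With these clarifications your gluing argument via the standard lemma (extend $h$ by $u$ outside the ball; upper semicontinuity at boundary points plus $h\ge u$ inside) goes through.
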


We recall the definition of weighted extremal functions relative to an open subset of \cn.
\begin{defn}\label{extremal}
Let \om\, be a bounded domain in \cn\, and $K$ be a compact subset of \om. Let $Q : K \to (-\infty,0]$ be a function   on
$K$. The \textbf{weighted relative extremal function} of $K$ with respect to \om  \,    and weight $Q$ is defined as
\begin{equation}\label{wre}
U_{K,Q,\om}(z):= \sup\{u(z)\mid  u \in PSH(\om),\, u<0 \text{ on } \om ,\, u\leq Q \text{ on } K\}.
\end{equation}
It is  well known that the  \textbf{regularized weighted relative extremal function}  of $K$ with respect to \om  \,    and weight $Q$ defined by
\begin{equation}\label{wre1}
U_{K,Q,\om}^\ast(z):= \limsup\limits_{w\to z}U_{K,Q,\om}(w)
\end{equation}
is plurisubharmonic and maximal out of $K$. Again, we have $U_{K,Q,\om}=U_{K,Q,\om}^\ast$ quasi everywhere.
\end{defn}
If $Q=-1$, then $U_{K,Q,\om}$ is called \textbf{(unweighted) relative extremal function} of $K$ with respect to \om, and we denote it by
$U_{K,\om}$.

Here we give an analogue of Lemma \ref{SuppprtOfExtremalMeasureSubsetSw} for  relative extremal functions.

\begin{prop}\label{RelativeExtremalMeasureonSW} Let $Q$ be an admissible weight on $K$ (i.e., $\{z\in K \mid Q(z)<0\} $ is not pluripolar), where  $K$ is
 a compact subset of a hyperconvex domain \om. Then,   $ (dd^cU_{K,Q,\om}^\ast)^n$ is supported on the set  $\{z\in K \mid  U_{K,Q,\om}^\ast(z)\geq Q(z)\}$.
\end{prop}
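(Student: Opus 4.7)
The plan is a contradiction argument via Poisson modification (Theorem \ref{Poisson Modification}), with the decisive input being that $U_{K,Q,\om}^\ast$ is upper semicontinuous (as a plurisubharmonic function) while $Q = -\log w$ is lower semicontinuous (since $w$ is upper semicontinuous by admissibility). First I would record that, by the maximality of $U_{K,Q,\om}^\ast$ on $\om\setminus K$ noted after Definition \ref{extremal} combined with Theorem \ref{maximal}, the measure $\mu := (dd^c U_{K,Q,\om}^\ast)^n$ already satisfies $\supp\mu \subset K$; thus it remains to rule out mass on $\{z \in K : U_{K,Q,\om}^\ast(z) < Q(z)\}$.

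Suppose for contradiction that some $z_0 \in \supp\mu$ satisfies $U_{K,Q,\om}^\ast(z_0) < Q(z_0)$, and fix $c < c'$ strictly between these values. Upper semicontinuity of $U_{K,Q,\om}^\ast$ gives $r_1 > 0$ with $U_{K,Q,\om}^\ast < c$ on $\overline{B(z_0, r_1)}$, and lower semicontinuity of $Q$ gives $r_2 > 0$ with $Q > c'$ on $K \cap B(z_0, r_2)$; pick $r \leq \min(r_1, r_2)$ so that $\overline{B(z_0, r)} \subset \om$. To sidestep the nuisance that $U_{K,Q,\om}^\ast \leq Q$ on $K$ only holds quasi everywhere, I would run the Poisson modification not on $U_{K,Q,\om}^\ast$ directly but on a Choquet approximating sequence: pick plurisubharmonic $u_j$ in the defining family of \eqref{wre} with $u_j \uparrow u$ and $u^\ast = U_{K,Q,\om}^\ast$, each satisfying $u_j \leq Q$ on $K$ pointwise, and let $\tilde u_j$ denote the Poisson modification of $u_j$ in $B(z_0, r)$.

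Then $\tilde u_j \geq u_j$, $\tilde u_j = u_j$ off $B(z_0, r)$, and $(dd^c \tilde u_j)^n \equiv 0$ on $B(z_0, r)$. The maximum principle inside the ball forces $\tilde u_j \leq \sup_{\overline{B(z_0, r)}} u_j \leq c$, so $\tilde u_j \leq c < c' < Q$ on $K \cap B(z_0, r)$ while $\tilde u_j = u_j \leq Q$ on the remainder of $K$; hence each $\tilde u_j$ is again a competitor in \eqref{wre}, forcing $\tilde u_j \leq U_{K,Q,\om} \leq U_{K,Q,\om}^\ast$. Passing to $\tilde U := \lim \tilde u_j$, the Bedford--Taylor convergence theorem for increasing sequences of locally bounded plurisubharmonic functions gives $(dd^c \tilde U^\ast)^n = 0$ on $B(z_0, r)$; combined with $\tilde U \geq u$ and $\tilde U \leq U_{K,Q,\om}^\ast$, this forces $\tilde U^\ast = U_{K,Q,\om}^\ast$, so $\mu$ vanishes on $B(z_0, r)$, contradicting $z_0 \in \supp\mu$. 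The main obstacle is precisely the polar-set bookkeeping that forces the Choquet detour---applying Poisson modification to $U_{K,Q,\om}^\ast$ directly would only give $\tilde U \leq Q$ on $K$ quasi everywhere, breaking the competitor condition---together with the mild standard nuisance of the strict inequality $u < 0$ in \eqref{wre}, which is bypassed by the usual perturbation.
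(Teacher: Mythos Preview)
Your argument is correct and follows essentially the same route as the paper: pick $z_0$ with $U_{K,Q,\om}^\ast(z_0) < Q(z_0)$, use upper semicontinuity of $U^\ast$ and lower semicontinuity of $Q$ to find a ball on which $\sup U^\ast < \inf_{K\cap B} Q$, apply Poisson modification on that ball, verify the modified function is still a competitor, and conclude $U^\ast$ is maximal there.

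The only difference is the bookkeeping you flag at the end. The paper applies Poisson modification directly to $U_{K,Q,\om}^\ast$ and asserts the result is a competitor, implicitly treating the inequality $U^\ast \leq Q$ on $K$ outside a pluripolar set as sufficient (a standard elision, since the extremal envelope is unchanged by pluripolar exceptional sets). Your Choquet-sequence detour makes this step honest: by modifying genuine competitors $u_j$ and passing to the limit with Bedford--Taylor convergence, you avoid ever needing $U^\ast \leq Q$ pointwise. This buys rigor at the cost of an extra limiting argument; the underlying mechanism is identical.
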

\begin{proof} Let $z_0$ be a point in $K$,  such  that  $U^\ast_{K,Q,\om}(z_0)<
Q(z_0)-\eps$, for some positive $\eps$. We will show that $U_{K,Q,\om}^\ast$ is maximal in a neighborhood $U$ of
$z_0$.

By the facts that $Q$ is lower semicontinuous and $U_{K,Q,\om}^\ast$ is upper semicontinuous, we have $\{z\in K \mid Q(z)>Q(z_0)-\eps/2\}$ is open  in $K$ relative to \om\,  and   $\{z\in \om\mid U_{K,Q,\om}^\ast(z)<
U_{K,Q,\om}^\ast(z_0)+\eps/2\}$ open. So we may find a ball of radius $r$ around $z_0$ such that
$$\sup\limits_{z\in B(z_0,r)}U_{K,Q,\om}^\ast(z)<\inf \limits_{z\in B(z_0,r)\cap K}Q(z).$$

Applying the Poisson modification (Theorem \ref{Poisson Modification}) to $U_{K,Q,\om}^\ast$ on $B(z_0,r)$, we can find  a
plurisubharmonic function $u$ such that $u\geq U_{K,Q,\om}^\ast$ on $B(z_0,r)$ and $u  =U_{K,Q,\om}^\ast$ on $\om\setminus
B(z_0,r)$, which is negative on \om, and   maximal on $B(z_0,r)$. Now $u$ is a competitor for the relative
 extremal function because $u(z)\leq \sup\limits_{z\in B(z_0,r)}U_{K,Q,\om}^\ast(z)<\inf
\limits_{z\in B(z_0,r)\cap K}Q(z)$ for all $z\in B(z_0,r)$. Hence, $u\equiv
U_{K,Q,\om}^\ast$. Therefore, we get $U_{K,Q,\om}^\ast$ is maximal in a
neighborhood  of $z_0$. Hence, $z_0$  is not in the support of $(dd^cU_{K,Q,\om}^\ast)^n$.
\end{proof}

\begin{comment}
We will use the following notation for the  Monge-{A}mp\`ere measures of the weighted extremal functions of a compact set $K$ and an admissible weight on $K$.
\begin{equation}
\mu_{K,Q}:=(dd^c V_{K,Q}^\ast)^n,\mu_{K,\om}:=
\end{equation}   and
\begin{equation}
\mu_{K,Q,\om}:= (dd^cU_{K,Q,\om}^\ast)^n .
\end{equation}
\end{comment}
Levenberg showed in  \cite{LevenbergMeasures} that $(dd^c V_{K}^\ast)^n$ { and }
$ (dd^cU_{K,\om}^\ast)^n $ are mutually absolutely continuous for a non-pluripolar compact set $K$.  However, unlike the
unweighted case,  $(dd^c V_{K,Q}^\ast)^n$ and $(dd^cU_{K,Q,\om}^\ast)^n $ are not necessarily absolutely continuous  in general.

 \begin{ex}\label{Example1} Let $K$ be the closed unit ball,  $\overline{B(0,1)}$. We define a continuous weight on $K$ by letting
  $ Q(z)=\max\left[\log |z|,-\frac{1}{2}\right]-1$. Let  $\om_1 :=B(0,2)$
  and $\om_2 :=B(0,\e)$. We have the weighted
  relative extremal functions
\begin{eqnarray}
% \nonumber to remove numbering (before each equation)
\nonumber  U_{K,Q,\om_1}(z) &=& \max\left[ {\max\left
[\log|z|,-\frac{1}{2}\right]-1},
  \frac{\log|z|}{\log 2}-1\right], \\
\nonumber  U_{K,Q,\om_2}(z) &=& \max\left[\log |z|,-\frac{1}{2}\right]-1,
\end{eqnarray}
  and the weighted (global) extremal function
$$V_{K,Q}(z)= \max\left[\log |z|,\frac{1}{2}\right]-1.$$
Now the  weighted relative  extremal measure with respect to $\om_1$ is supported on two concentric spheres. Namely,
$\supp (dd^c U_{K,Q,\om_1})^n= \p B(0,1) \cup \p B(0,\e^{-1/2})$. Whereas,  $\supp (dd^c V_{K,Q})^n=\supp (dd^c
U_{K,Q,\om_2})^n=  \p B(0,\e^{-1/2})$. Therefore, $(dd^cU_{K,Q,\om_1})^n$ is not absolutely continuous with respect to
$(dd^cV_{K,Q})^n$ and  $(dd^cU_{K,Q,\om_2})^n$.
\end{ex}

\begin{ex}\label{Example2}  Let $K$ be the compact set $\overline{B(0,1)}$. We define a continuous weight on $K$ by letting
  $ Q:=\frac{1}{2}\max\left[\log |z|,-\frac{1}{2}\right]-\frac{1}{2}$. Let $\om$ be $B(0,\e)$.
  Under these conditions, we have the weighted
  relative extremal function
$$ U_{K,Q,\om}(z) = \frac{1}{2}\max\left[\log
|z|,-\frac{1}{2}\right]-\frac{1}{2},
$$
and the weighted (global) extremal function
$$V_{K,Q}(z)= \max\left[\frac{1}{2}\max\left[\log |z|,-\frac{1}{2}\right], \log |z|\right]-\frac{1}{2}.$$
In this case,  the global  (weighted)  extremal measure is supported on two concentric spheres. Namely,
 $\supp (dd^c V_{K,Q})^n= \p B(0,1) \cup \p B(0,\e^{-1/2})$. On the other hand, $\supp
 (dd^cU_{K,Q,\om})^n=\p B(0,\e^{-1/2})$. Therefore, $(dd^cV_{K,Q})^n$ is not
absolutely continuous with respect to $(dd^cU_{K,Q,\om})^n$.
\end{ex}

\section{Examples}
In classical pluripotential theory, the extremal measures, $(dd^c V_{K}^\ast)^n$ and $(dd^cU_{K,\om}^\ast)^n$, of a compact set $K$ are supported on the outer boundary of $K$.  In the weighted setting, this does not hold true
anymore. Many compact sets may arise as support of the  \mam of some weighted  extremal function. In particular,  by Theorem 4.1.1 of \cite{Saff-Totik}, for any compact set $K$ in the plane, which has positive logarithmic capacity at every point of $K$ (i.e., $C(K\cap B(z_0,r),\om)>0$ for every $z_0\in K$, for all $r>0$ and for some hyperconvex domain \om\, containing $K$),  there exists an
admissible weight on $K$ such that the support of the weighted extremal measure is $K$. Unfortunately, the proof  of the theorem utilizes  the notion of logarithmic potentials which is not available in higher dimensions. Thus, determining  the support of a weighted equilibrium measure and obtaining a similar result for higher dimensions are extremely  difficult. In one dimension,
Varju and Totik \cite{TotikVarju} found some necessary and sufficient conditions for
weights  on the unit circle  that the support of the weighted extremal measure is the whole unit circle.
In one variable, Benko, Damelin and Dragnev \cite{BenkoDamelinDragnev} also obtained some  sufficiency conditions for the same problem. They  also give explicit examples where the supports of the weighted equilibrium measures are the full circle or a finite union of arcs. Similar results for higher dimensions do not exists to our best knowledge.
In this
section, we show concrete examples of families  of compact sets which are supports of some  weighted extremal measures.
Some references for  supports of   weighted equilibrium measures are \cite{BenkoDamelinDragnev},
\cite{CallaghanThesis},  \cite{Saff-Totik} and \cite{TotikVarju}.

The next proposition  gives criteria for  the weight $Q$ such that the weighted extremal  measure is supported
on the boundary of $K$.

\begin{prop}\label{SuperharmonicExtremalMeasure} Let $K$ be a closed set and $Q$ is  an
admissible weight on $K$. Then   $S_w \subset \p  K $, if either of the following holds:
\begin{enumerate}
    \item $Q$ is maximal plurisubharmonic in the interior of $K$,
    \item $Q$ is superharmonic in the interior of $K$.
\end{enumerate}
\end{prop}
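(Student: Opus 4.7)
The plan is to show that if $z_0 \in \inte(K)$, then $V_{K,Q}^\ast$ is maximal in a neighborhood of $z_0$, which by Theorem \ref{maximal} is equivalent to $z_0 \notin S_w$. Choose $r>0$ with $\overline{B(z_0,r)} \subset \inte(K)$ and apply the Poisson Modification (Theorem \ref{Poisson Modification}) to $V_{K,Q}^\ast$ on $B(z_0,r)$. This produces $\tilde u \in PSH(\cn)\cap L^\infty_{loc}(\cn)$ with $\tilde u = V_{K,Q}^\ast$ off the ball, $\tilde u \geq V_{K,Q}^\ast$ on it, and $(dd^c\tilde u)^n = 0$ inside. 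Since $V_{K,Q}^\ast \in L$ and $\tilde u$ agrees with it outside a bounded set, $\tilde u \in L$ as well.

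The main step is to establish $\tilde u \leq Q$ on the closed ball. I would first upgrade the standard quasi-everywhere inequality $V_{K,Q}^\ast \leq Q$ on $K$ to a pointwise inequality on $\inte(K)$: because $w$ is upper semicontinuous the weight $Q = -\log w$ is lower semicontinuous, while $V_{K,Q}^\ast$ is upper semicontinuous, so the set $\{V_{K,Q}^\ast > Q\}$ is open; being both pluripolar and open it must have empty intersection with $\inte(K)$, since pluripolar sets have empty Euclidean interior. In particular, $\tilde u = V_{K,Q}^\ast \leq Q$ holds pointwise on the sphere $\p B(z_0,r) \subset \inte(K)$. In case (1), $\tilde u$ is plurisubharmonic on $B(z_0,r)$, upper semicontinuous on its closure, and dominated by the maximal plurisubharmonic function $Q$ on $\p B(z_0,r)$; the defining property of maximality immediately gives $\tilde u \leq Q$ on $B(z_0,r)$. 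In case (2), $\tilde u$ is subharmonic and $-Q$ is subharmonic on $B(z_0,r)$, so $\tilde u - Q$ is an upper semicontinuous subharmonic function with boundary values $\leq 0$, and the classical maximum principle yields the same conclusion.

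With $\tilde u \leq Q$ on $\overline{B(z_0,r)}$ and $\tilde u = V_{K,Q}^\ast \leq Q$ q.e.\ on $K\setminus B(z_0,r)$, we obtain $\tilde u \leq Q$ quasi-everywhere on $K$. Since $(dd^c V_{K,Q}^\ast)^n$ charges no pluripolar set and is supported on $S_w \subset \{V_{K,Q}^\ast \geq Q\}$ by Lemma \ref{SuppprtOfExtremalMeasureSubsetSw}, this translates to $\tilde u \leq Q \leq V_{K,Q}^\ast$ almost everywhere with respect to $(dd^c V_{K,Q}^\ast)^n$. Applying the Domination Principle (Theorem \ref{DominationPrinciple}) to $\tilde u \in L$ and $V_{K,Q}^\ast \in L^+$ then yields $\tilde u \leq V_{K,Q}^\ast$ on all of $\cn$; combined with the reverse inequality from the Poisson modification we get $\tilde u \equiv V_{K,Q}^\ast$, so $V_{K,Q}^\ast$ is maximal on $B(z_0,r)$ and $z_0 \notin S_w$.

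The main obstacle is exactly the transition from the q.e.\ bound $V_{K,Q}^\ast \leq Q$ to a pointwise bound on $\p B(z_0,r)$: both the maximality definition invoked in (1) and the subharmonic maximum principle invoked in (2) require the inequality pointwise on the boundary sphere, and the semicontinuity-plus-empty-interior argument described above is what makes the two cases go through uniformly.
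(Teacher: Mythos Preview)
Your overall strategy matches the paper's: Poisson-modify $V_{K,Q}^\ast$ on a ball in $\inte(K)$, show the modification still satisfies $\tilde u\leq Q$ there, and conclude $\tilde u=V_{K,Q}^\ast$. But the step you yourself flag as ``the main obstacle'' contains an error. The claim that $\{V_{K,Q}^\ast > Q\}$ is open because $V_{K,Q}^\ast$ is upper semicontinuous and $Q$ is lower semicontinuous is false in general: for usc $f$ and lsc $g$ the set $\{f>g\}$ need not be open (take $f=\chi_{\{0\}}$ and $g\equiv 0$; then $\{f>g\}=\{0\}$). So the ``open and pluripolar, hence empty'' argument does not go through as written.

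The paper repairs this more directly. In case~(1), $Q$ is plurisubharmonic and hence \emph{upper} semicontinuous; combined with the lower semicontinuity coming from admissibility, this forces $Q$ to be continuous on $\inte(K)$. Then from $V_{K,Q}\leq Q$ pointwise on $K$ one gets immediately $V_{K,Q}^\ast(z_0)=\limsup_{z\to z_0}V_{K,Q}(z)\leq Q(z_0)$ for every interior point, with no appeal to pluripolarity. For case~(2), where $Q$ need not be continuous, one can argue via mean values instead: since $V_{K,Q}^\ast\leq Q$ almost everywhere (pluripolar sets are Lebesgue null), and $V_{K,Q}^\ast$ is subharmonic while $Q$ is superharmonic, comparing ball averages at $z_0$ yields the pointwise inequality $V_{K,Q}^\ast(z_0)\leq Q(z_0)$.

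A smaller point: once $\tilde u\leq Q$ is established on $K$ (modulo a pluripolar subset of $\partial K$), $\tilde u$ is essentially a competitor in the defining supremum for $V_{K,Q}$, so $\tilde u\leq V_{K,Q}^\ast$ follows at once. Your route through Lemma~\ref{SuppprtOfExtremalMeasureSubsetSw} and the Domination Principle is correct but longer than necessary; the paper simply observes that the modification is a competitor for the weighted extremal function.
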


\begin{proof}
We give the proof of the first. The proof of second part is similar. In the superharmonic case we use the fact that a plurisubharmonic function which is also superharmonic is in fact pluriharmonic.

Let $Q$ be  superharmonic in the interior of $K$. Note that $Q$ is continuous in the interior of $K$ since it is both upper semicontinuous and lower semicontinuous.
Let $z_0 \in  \inte( K )$, i.e.,  there exists $r>0$ such that $B(z_0,r)\subset \inte( K )$. Since $Q$ is continuous $V^ \ast _{ K , Q}(z) \leq Q(z)$ for all $z\in B(z_0,r)$. By applying the  Poisson modification to the plurisubharmonic function $V^ \ast _{ K , Q} $ on the ball $B(z_0,r)$, we obtain $v\in L^+$ such that $v=V^ \ast _{ K , Q} $ on $\cn\setminus B(z_0,r)$ and $v$ is maximal on $B(z_0,r)$. Since $V^ \ast _{ K , Q} \leq Q$ on $ B(z_0,r)$, we have $v\leq Q$ on $B(z_0,r)$. Hence $v$ is a competitor for the weighted extremal function. Thus $v\equiv  V^ \ast _{ K , Q} $, giving that $V^ \ast _{ K , Q} $ is maximal on $ B(z_0,r)$.
\end{proof}

\begin{rem}
Note that similar proofs give the same results of  Proposition \ref{SuperharmonicExtremalMeasure} for the weighted relative extremal measure.
\end{rem} 

This was an extreme  case where the weighted extremal measure is supported on the boundary. Now another extreme case  is that
the closure of a strictly pseudoconvex domain can be obtained as the support of a weighted
extremal measure.

\begin{thm}\label{SupportStrPsConvex} Let \om \,   be   a strictly pseudoconvex domain in \cn\, with $\mathcal{C}^2$ boundary.
There exists an admissible  weight function  $Q$ on $K:= \overline{\om}$  such that $\supp (dd^{c} V_{K,Q}^\ast)^n =
\overline{\om}$.
\end{thm}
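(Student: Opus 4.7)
Let $\rho$ be a $\mathcal{C}^2$ strictly plurisubharmonic defining function for $\om$ on a neighborhood $U$ of $K:=\overline{\om}$, available by strict pseudoconvexity and $\mathcal{C}^2$ regularity of the boundary. Set $Q:=\eps\rho|_K$ for a small $\eps>0$ to be chosen later. Admissibility of $Q$ is immediate: it is smooth, and $\{w>0\}=K$ is not pluripolar. The strategy is to show $V_{K,Q}^\ast = Q = \eps\rho$ on the interior $\om$. Once this is established, $(dd^c V_{K,Q}^\ast)^n$ coincides with $\eps^n(dd^c\rho)^n$ on $\om$, a positive smooth measure by the strict plurisubharmonicity of $\rho$, so its support contains $\overline{\om}$; Lemma~\ref{SuppprtOfExtremalMeasureSubsetSw} gives the reverse inclusion $\supp(dd^c V_{K,Q}^\ast)^n \subset K = \overline{\om}$, and equality follows.

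The upper bound $V_{K,Q}^\ast \leq Q$ on $\om$ is routine: by definition $V_{K,Q}\leq Q$ on $K$, and continuity of $Q$ on the interior gives $V_{K,Q}^\ast(z) = \limsup_{w\to z} V_{K,Q}(w) \leq Q(z)$ for $z\in\om$. The difficulty is the lower bound, for which one must exhibit a competitor $\Phi\in L$ satisfying $\Phi = Q$ on $K$; this is where essentially all the work is.

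To build $\Phi$ I would glue the weight to the unweighted Lelong extremal function of $K$. Let $V := V_K^\ast$, which lies in $L^+$ and is continuous (strictly pseudoconvex domains with $\mathcal{C}^2$ boundary are regular), with $V\equiv 0$ on $K$ and $V>0$ outside. Choose an open $U_1$ with $K\subset U_1$, $\overline{U_1}\subset U$, set $b := \sup_K(-Q) = \eps\sup_K(-\rho)$, and define
\begin{equation*}
\Phi(z) := \begin{cases} \max\{\eps\rho(z),\, V(z)-b\}, & z\in U_1,\\ V(z)-b, & z\in\cn\setminus U_1.\end{cases}
\end{equation*}
On $K$ one has $V-b\equiv -b\leq \eps\rho$ by the choice of $b$, so $\Phi = \eps\rho = Q$ there. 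For $\Phi$ to be globally plurisubharmonic the two pieces must agree on a neighborhood of $\partial U_1$, i.e.\ $\eps\rho < V-b$ in such a neighborhood. Since $V$ attains a strictly positive minimum $m$ on $\partial U_1$ while $\rho$ is bounded on $\overline{U_1}$, choosing $\eps$ small enough makes this inequality strict; then $\Phi$ is PSH on $\cn$, lies in $L$ (its growth at infinity is that of $V$), and satisfies $\Phi \leq Q$ on $K$, so $V_{K,Q}\geq \Phi = Q$ on $K$.

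Combining the two bounds yields $V_{K,Q}=Q$ on $K$ and hence $V_{K,Q}^\ast = \eps\rho$ throughout the open set $\om$; the support calculation then proceeds as in the first paragraph, using locality of the Monge--Amp\`ere operator on locally bounded plurisubharmonic functions. The main obstacle is the gluing step producing $\Phi$: strict pseudoconvexity is essential, both because it provides a genuinely strictly plurisubharmonic defining function (so that $(dd^c\rho)^n>0$ ultimately delivers full support on $\overline{\om}$), and because the resulting scaling freedom in $\eps$ is precisely what lets $\Phi$ meet the Lelong-class growth condition while still matching $Q$ on $K$.
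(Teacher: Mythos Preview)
Your argument is correct and reaches the same conclusion, but the route differs from the paper's in an interesting way. Both proofs take $Q=\eps\rho|_K$ for a $\mathcal{C}^2$ strictly plurisubharmonic defining function $\rho$ and small $\eps>0$. The paper, however, glues $\eps\rho$ on $\om$ directly to the unweighted extremal function $V_K$ on $\cn\setminus\om$ across $\p\om$, invoking the gluing lemma (Theorem~\ref{GluePshSmooth}) together with Guan's $\mathcal{C}^{1,1}$ regularity of $V_K$ to compare normal derivatives on $\p\om$; the domination principle then yields the explicit global formula $V_{K,Q}=\eps\rho$ on $\overline{\om}$ and $V_{K,Q}=V_K$ on $\cn\setminus\om$. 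Your approach instead produces a single global competitor $\Phi=\max\{\eps\rho,\,V_K-b\}$ on a neighborhood $U_1\supset K$, patched to $V_K-b$ outside, and uses it only to force the lower bound $V_{K,Q}\geq Q$ on $K$; this requires nothing beyond continuity of $V_K$ and elementary properties of maxima of plurisubharmonic functions. What you gain is a considerably lighter toolkit: no $\mathcal{C}^{1,1}$ regularity, no normal-derivative gluing lemma. What the paper's argument gains is an explicit closed-form expression for $V_{K,Q}$ on all of $\cn$, not just on $K$. One small point worth making explicit in your write-up: the claim that $V_K$ has a strictly positive minimum on $\p U_1$ uses that $\p U_1$ lies outside the polynomial hull of $K$; this is the same implicit hypothesis the paper needs to ensure $\frac{\p V_K}{\p n}>0$ on $\p\om$, so the two proofs are on equal footing here.
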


In order to prove Theorem  \ref{SupportStrPsConvex},  we need the following important  theorem on gluing
plurisubharmonic functions.

\begin{thm}\label{GluePshSmooth}  Let \om \,   be    a strictly pseudoconvex domain in \cn\, with $\mathcal{C}^2$  boundary.
Let $u\in \mathcal{C}^1(\overline{\om})\cap PSH(\om) $ and $v \in \mathcal{C}^1(\cn \setminus \om)\cap PSH(\cn \setminus \overline{\om}) $, such that $u=v$ on the boundary of \om. If the normal
derivatives satisfy   $\frac{\p u}{\p n} \leq \frac{\p v}{\p n}$ on the boundary of \om, then the function defined by  $$ V
:=\left\{\begin{array}{ccc}
u & \quad \, \text{ on } &  \om ;                       \\
v & \quad \, \text{ on }  & \cn \setminus \om   \\
\end{array}\right.
$$ is plurisubharmonic on \cn.
\end{thm}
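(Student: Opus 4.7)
The plan is to establish $V\in PSH(\cn)$ by checking that $V|_L$ is subharmonic for every complex line $L\subset\cn$. First I would observe that $V$ is continuous on \cn: both $u$ and $v$ are $\mathcal{C}^1$ up to $\p\om$ and agree there, so the two pieces glue to a continuous (hence upper semicontinuous) function. At any $q\in L\setminus\p\om$ the restriction $V|_L$ coincides locally with $u|_L$ or $v|_L$ and is therefore subharmonic, so the entire issue lives at boundary points $p\in L\cap\p\om$. There I would split into two cases according to the position of $L$ relative to the complex tangent space $T_p^{\mathbb{C}}\p\om$.

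In the complex tangent case $T_pL\subset T_p^{\mathbb{C}}\p\om$, strict pseudoconvexity is decisive. Choosing a $\mathcal{C}^2$ strictly plurisubharmonic defining function $\rho$ for $\om$ and passing to the standard normal form at $p$, one may assume $\rho=\operatorname{Im} z_n+H(z',\bar z')+O(\|z\|^3)$ with $H$ a positive definite Hermitian form on the tangential coordinates $z'$. Along $L=\{t\xi\}$ with $\xi=(\xi',0)$ tangential, $\rho|_L(t)=|t|^2H(\xi',\bar\xi')+O(|t|^3)$, strictly positive for small $t\ne 0$. Hence $L\cap\overline{\om}=\{p\}$ on a small disk $D\subset L$, so $V|_L=v|_L$ on $D$; since $v|_L$ is subharmonic on $D\setminus\{p\}$ and continuous on $D$, the removable singularity property of subharmonic functions gives subharmonicity on all of $D$.

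In the transverse case $T_pL\not\subset T_p^{\mathbb{C}}\p\om$, the set $L\cap\p\om$ is locally a $\mathcal{C}^2$ real curve $\gamma$ splitting $L$ into $L\cap\om$ and $L\cap(\cn\setminus\overline{\om})$, and $V|_L$ is piecewise $\mathcal{C}^1$ and continuous across $\gamma$. Integrating by parts in $L$, the distributional Laplacian $\Delta(V|_L)$ equals the nonnegative smooth pieces $\Delta(u|_L)$ and $\Delta(v|_L)$ plus a singular part on $\gamma$ equal to $\p_\nu(v-u)\,ds_\gamma$, where $\nu$ is the unit normal to $\gamma$ in $L$ pointing out of $\om$. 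Because $u=v$ along $\p\om$, the gradient $\nabla(v-u)|_p$ is parallel to the outward unit normal $n$ of $\p\om$, so the singular density reduces to $\p_n(v-u)(p)\,\langle n,\nu\rangle$, nonnegative by the normal-derivative hypothesis together with the strictly positive projection $\langle n,\nu\rangle>0$ coming from transversality.

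The main obstacle is the complex tangent case, and it is precisely there that strict pseudoconvexity is indispensable. For a merely pseudoconvex boundary a complex tangent line could reenter $\om$ (for instance along a Levi-flat piece of $\p\om$); then the identification $V|_L=v|_L$ near $p$ would fail, and on the interior side of $L\cap\p\om$ the normal-derivative hypothesis would give the wrong sign, so the removable singularity shortcut would collapse. The transverse case, by contrast, is essentially a routine Green-Stokes computation once the boundary agreement $u=v$ and the normal-derivative hypothesis are in hand.
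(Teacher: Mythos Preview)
Your argument is correct and the two-case split (complex tangential versus transverse) matches the paper's, but the execution is genuinely different from what the paper does. The paper deliberately gives an elementary argument that \emph{avoids} distributions: after localizing at a boundary point it invokes Narasimhan's lemma to make $\Omega$ strictly convex near $p$, and then verifies the submean-value inequality on each complex line by a bare-hands first-order Taylor expansion of $u$ and $v$ and an explicit estimate of $\int_0^{2\pi}V(b e^{i\theta})\,d\theta$. In the complex-tangential case the paper does not use removable singularities; instead it slides the circle outward along the normal and passes to the limit by continuity. Your route---strict pseudoconvexity forcing $L\cap\overline{\Omega}=\{p\}$ locally and the jump formula $\Delta(V|_L)=\Delta(u|_L)+\Delta(v|_L)+(\partial_\nu v-\partial_\nu u)\,ds_\gamma$ in the transverse case---is the standard distributional argument (essentially the Blanchet--Gauthier approach the paper cites for subharmonic functions). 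It is cleaner conceptually and does not need the convexification step; what it costs is that the jump formula in the form you state it is usually proved for piecewise $\mathcal{C}^2$ data, so with only $\mathcal{C}^1$ regularity you should say a word about why the singular layer is still exactly $(\partial_\nu v-\partial_\nu u)\,ds_\gamma$ (e.g.\ mollify $u$ and $v$ on their respective sides and pass to the limit using $\mathcal{C}^1$ convergence). Also, with a $\mathcal{C}^2$ boundary the normal-form remainder is $o(\|z\|^2)$ rather than $O(\|z\|^3)$, though this does not affect your conclusion.
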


Here $n$ is the outward unit normal and the normal derivative is $\frac{\p u}{\p n}= \nabla u \cdot n $. Blanchet and Gauthier
give the proof of the above theorem for subharmonic  functions  and domains with $\mathcal{C}^1$ boundary in \cite{GauthierBlanchet}. Moreover, they state this theorem for  subsolutions of any elliptic partial differential equation.
This theorem is true in a more general setting; here we give an elementary proof for the case of   plurisubharmonic functions in strictly pseudoconvex domains and we do not use the theory of distributions.

\begin{proof} Upper semicontinuity of  $V$ is trivial as it  is continuous.   Since  $u$   and $v$ are  \psh  on \om\,  and
$\cn \setminus\overline{\om}$ respectively, we have  $V$ is \psh on  $ \cn \setminus \p \om$. Thus, we need to show
that at every point $z\in \p\om$, we have $V$ is subharmonic  on each complex line passing through  $z$.

We fix  $z \in \p\om$. Since plurisubharmonicity is preserved under biholomorphic mappings, applying a translation and a rotation, we may assume that $z=0$ and the outer  normal, $n_1$, is the real vector $n_1=(1,0,\dots, 0)\in \mathbb{R}^{2n}$. Furthermore, we may take $u(0)=v(0)=0$.

By the  Narasimhan lemma (See  \cite{KrantzSCV} Lemma 3.2.3), there is a neighborhood $U$ of $0$ and a biholomorphic mapping of $U$, $\varphi : U \to \cn$, such that $\varphi(U\cap \om) $  is strictly convex. By applying such a biholomorphic mapping of a neighborhood of the origin, we may assume that  \om \,    is strictly convex around this point.
By the proof of the Narasimhan lemma, the Jacobian matrix of $\varphi$ at $z=0$ can be taken as  the identity matrix. Therefore, the unit normals, the normal derivatives and the inequality $\frac{\p u}{\p n}(0) \leq \frac{\p v}{\p n}(0)$  are preserved under the mapping.

Note that $u$ and $v$ can be extended to a neighborhood of the origin such that the extensions are  differentiable  at the origin. We  will  continue writing $u$ and $v$ for the extensions and note that our calculations are independent of the extensions. Now we will  show that $V$ satisfies the following integral
inequality on any complex line passing through $0$.
$$V(0) \leq \frac{1}{2\pi}\int\limits_0^{2\pi} V(0+ b\e^{i\theta}) d\theta $$ where $b = \rho
((a_1, b_1), \dots , (a_n, b_n))$ so that  $$b\e^{i\theta} = \rho ((a_1\cos(\theta),
b_1\sin(\theta)), \dots , (a_n\cos(\theta), b_n\sin(\theta)).$$
We may also assume that $a_1$ is positive. Otherwise, we may take $-b$ which gives the same complex line.

If the complex line intersects with \om \, only at $0$, i.e.,  the complex line lies entirely in the tangent space, then consider $z_j:=(x_j, 0,\dots, 0)\in \mathbb{R}^{2n}$ where $x_j>0$ such that $z_j\to 0$. Note that all $z_j \in \cn\setminus\overline{\om}$ and the complex line passing through $z_j$ which is  in the direction of $b$ lies completely in $\cn\setminus\overline{\om}$.

By plurisubharmonicity of $v$ on $\cn\setminus\overline{\om}$, we have
$$V(z_j)=v(z_j) \leq \frac{1}{2\pi}\int\limits_0^{2\pi} v(z_j + b\e^{i\theta}) d\theta =\frac{1}{2\pi}\int\limits_0^{2\pi} V(z_j + b\e^{i\theta}) d\theta. $$
As $z_j\to 0$, we have  $v(z_j) \to v(0)$. By uniform continuity of $v$ in a neighborhood of $0$, we have
$$\int\limits_0^{2\pi} V(z_j + b\e^{i\theta}) d\theta \to \int\limits_0^{2\pi} V(0+ b\e^{i\theta}) d\theta.$$ Hence, we obtain
$$V(0) \leq \frac{1}{2\pi}\int\limits_0^{2\pi} V(0+ b\e^{i\theta}) d\theta .$$

\begin{comment} If the complex line intersects with \om \, only at $0$, i.e.,  the complex line lies entirely the tangent space, then
 inequality is trivial as $V(z)=v(z) $ on the circle which is nonnegative where as $V(0)=0$.
\end{comment}

If the complex line intersects with \om\, not only at $0$, then using the fact that $\om $   is strictly convex in
a neighborhood of $0$ we can write $$\int\limits_0^{2\pi} V( b\e^{i\theta}) d\theta =
\int\limits_{\theta_1}^{\theta_2} u( b\e^{i\theta}) d\theta +
\int\limits_{\theta_2}^{\theta_1} v( b\e^{i\theta}) d\theta,$$ where $\pi / 2 <
\theta_1 < \pi $   and $\pi  < \theta_2 < 3\pi /2 $ . Now using the total differential of $u$ and $v$ around $0$ we have
\begin{eqnarray}
% \nonumber to remove numbering (before each equation)
  u(z) &=& \sum_{i=1}^n u_{x_i}(0)x_i+ \sum_{i=1}^n u_{y_i}(0)y_i + \sum_{i=1}^n \eps_i x_i +\sum_{i=1}^n \eps'_i y_i, \\
  v(z) &=& \sum_{i=1}^n v_{x_i}(0)x_i+ \sum_{i=1}^n v_{y_i}(0)y_i + \sum_{i=1}^n \eta_i x_i +\sum_{i=1}^n \eta'_i y_i,
\end{eqnarray}
where $\eps_i,\, \eta_i \to 0$ as $x_i\to 0$ and $\eps'_i,\, \eta'_i\to 0$ as $y_i\to 0$. We remark
  that all first order partial derivatives of $u$ and $v$  vanish except the partial derivatives with respect to $x_1$.
This  is due to the facts that $(1,0,\dots,0)$   is the outer normal and  the rest of the standard basis elements lie in the tangent
space and  that $u$ and $v$ are defining functions for the boundary of \om. By defining  $A:=u_{x_1}(0)  = \frac{\p
u}{\p n}(0) $ and $B:=v_{x_1}(0) = \frac{\p v}{\p n}(0) $, we note that $ A\leq B$. Thus, we have
$$\int\limits_0^{2\pi} V( b\e^{i\theta}) d\theta =
\int\limits_{\theta_1}^{\theta_2} u( b\e^{i\theta}) d\theta +
\int\limits_{\theta_2}^{\theta_1} v( b\e^{i\theta}) d\theta,$$
which equals to
$$\int\limits_{\theta_1}^{\theta_2}
         \rho\left(u_{x_1}(0)  (a_1\cos(\theta)) +  \sum_{i=1}^n \eps_i(a_i\cos(\theta)) +
         \sum_{i=1}^n \eps'_i(b_i\sin(\theta))\right) d\theta $$
$$+ \int\limits_{\theta_2}^{\theta_1}\rho\left(v_{x_1}(0) \rho (a_1\cos(\theta)) +
\sum_{i=1}^n \eta_i (a_i\cos(\theta)) +\sum_{i=1}^n \eta'_i(b_i\sin(\theta))\right) d\theta ,$$
which is greater than or equal to$$ M:=(A-B)\rho [\sin(\theta_2)-\sin(\theta_1)]
-2\pi\rho[\sup(\eps_i, \eta_i, \eps'_i,\eta'_i)][\max_{i=1,\dots,n}\{|a_i|,|b_i|\}.
$$
Note that $M$ is positive by the facts that $A-B \leq 0 $, that $\sin(\theta_2)-\sin(\theta_1)<0$, that
$\eps_i,\, \eta_i \to 0$ as $x_i\to 0$ and $\eps'_i,\, \eta'_i\to 0$ as $y_i\to 0$.
So $M\geq 0 =V(0)$. Hence,  $V$ satisfies the integral inequality on each complex line. Therefore, $V$  is plurisubharmonic.
\begin{comment}
\begin{eqnarray}
% \nonumber to remove numbering (before each equation)
\nonumber\int\limits_{\theta_1}^{\theta_2} u( b\e^{i\theta}) d\theta +
\int\limits_{\theta_2}^{\theta_1} v( b\e^{i\theta}) d\theta &=&
\int\limits_{\theta_1}^{\theta_2} u( b\e^{i\theta}) d\theta -
 \int\limits_{\theta_1}^{\theta_2} v( b\e^{i\theta}) d\theta \\
\nonumber   &=& \int\limits_{\theta_1}^{\theta_2}
         (u_{x_1}(0) \rho (a_1\cos(\theta)) + o(|z|^2))d\theta\\
\nonumber   &-& \int\limits_{\theta_1}^{\theta_2}
         (v_{x_1}(0) \rho (a_1\cos(\theta)) + o(|z|^2))d\theta\\
\nonumber  &=& (a-b)\rho [\sin(\theta_2)-\sin(\theta_1)]+ o(\rho^2)
\end{eqnarray}\end{comment}
\end{proof}

\begin{proof}[Proof of Theorem \ref{SupportStrPsConvex}] Since \om \, is a strictly pseudoconvex domain, there exists a twice  continuously differentiable strictly  plurisubharmonic defining function $\rho$ defined in a neighborhood of the closure of \om. By Theorem 1.1 of  \cite{GuanRegularityGreenFunction},  the  extremal function $V_K$ is in $\mathcal{C}^{1,1}(\cn\setminus \om)$. Now we
define $\rho_\varepsilon := \varepsilon \rho$. Then, there exists $\varepsilon_0
>0$ that $\frac{\p \rho_{\varepsilon_0}}{\p n} < \frac{ \p V_K}{\p n}$ on $\p\om$.

We define $Q := \rho_{\varepsilon_0}$. Now consider
\begin{equation}
V :=\left\{\begin{array}{ccc}
Q & \quad \, \text{ on } &  \om ,                       \\
V_K & \quad \, \text{ on }  & \cn \setminus \om.   \\
\end{array}\right.
\end{equation}

By Theorem \ref{GluePshSmooth},  $V $ is plurisubharmonic and it is in Lelong class ${L}^+$. Since  $V=Q$ on $K$ and $V_K$ is maximal on $\cn \setminus \om$, by
domination principle,  we have $V_{K,Q}=V$ and the support $\supp (dd^{c} V_{K,Q})^n = \overline{\om} $ by definition of $Q$.
\end{proof}

The following observation gives us a way  of obtaining global subharmonic functions in Lelong class $L^+$ from a harmonic function defined in the unit disc (denoted by $\triangle$) of $\mathbb{C}$.

\begin{rem}\label{DiscHarmonic} Let $h$ be a harmonic function in the unit disc of $\mathbb{C}$ which is in
$C^{1}(\overline{\triangle})$. If $|\frac{\p h}{\p n}|\leq 1/2$, then the function defined by
\begin{equation}\label{GlueHarmonic}
g(z)=\left\{ \begin{array}{ll}
           h(z), & \hbox{ if } z\in \overline{\triangle}; \\
         h(\frac{1}{\bar{z}})+ \log|z| , & \hbox{ otherwise},\\
         \end{array}
       \right.
\end{equation}
is subharmonic and in $L^+$, where $n$ is the outer normal.  From this we can easily obtain a family of examples in the disc which is a special case of a result of Varju and Totik \cite{TotikVarju}.
\end{rem}

This observation gives us a way of constructing nonconstant weights on $\p\triangle$ such that the support of the weighted extremal measure is $p\triangle$.

Let $K$ be  the boundary of a bounded domain \om\, with smooth boundary. Our goal is  to find a continuous weight, $Q$, such that the
support of $(dd^{c} V_{K,Q})^n$ is the whole boundary. Note that  if \om\, is strictly pseudoconvex, then constant weights are sufficient for this purpose. However, we want
to find examples of nonconstant weights satisfying this condition. One may hope that if $V_{K,Q}=Q $ on $\p\om$, then
the support of the \mam is $\p\om$.

The following example shows that for a given nonconstant weight $Q$ on $\p\om=K$, the support  of $(dd^{c} V_{K,Q})^n$ is not
necessarily  all of $\p\om$. However, $V_{K,Q}=Q$ on $K$.

\begin{ex}
Let $K$ be the boundary of a bounded domain with smooth boundary. Let $L$ be a proper subset of $K$ which is regular
and the  polynomially convex hull of $L$ is a proper subset of  $K$. By regularity of $L$, we have  $V_L$ is continuous. Moreover, it is not constant on $K$ as it is not the polynomially convex hull of $K$.

We define $Q: K\to \mathbb{R}$ by  $Q(z)=V_L(z)$. Now $ V_{K,Q} = V_L$ whose
\mam is supported on  $L$, which is a proper subset of $K$.
\end{ex}

The next example shows that any closed ball can be obtained as the support of a Monge-{A}mp\`ere measure. Although this example is well known, we include it here for the sake of completeness.

\begin{ex}Let $K:= \overline{B(0,R)}$. We define $Q(z)=A(|z|^2-R^2)$ where $2AR\leq 1$. Then by Theorem \ref{GluePshSmooth}, we get
$$V_{K,Q}(z)=   \left\{
         \begin{array}{ll}
           A(|z|^2-R^2), & \hbox{ if } |z|\leq R; \\
          \log|z|-\log R, & \hbox{ if } |z|>R.
         \end{array}
       \right.
$$
It is clear that the support of the \mam is $K$.
\end{ex}

The following example shows that the finite union of concentric spheres can be obtained as the support of a weighted equilibrium measure.
\begin{ex}\label{Spheres}
Let $K_m=\overline{B(0,r_m)}$ for some $r_m>0$. We will inductively define weights on $K_m$ such that the weighted equilibrium
measure is supported on the concentric spheres, $\bigcup_{i=1}^m \p B(0,r_i)$ for $ m \geq 1$ and $0 < r_1 < r_2 < \dots <
r_m $.

Let $m=1$, we define $Q_1(z)=\log r_1$. Clearly, $V_{K_1,Q_1}(z)=\max(\log|z|,\log r_1)$ and $(dd^c V_{K_1,Q_1})^n$
is supported on the sphere of radius $r_1$ with center origin.

For $m \geq 2$, we define $Q_m=\frac{1}{2}V_{K_{m-1},Q_{m-1}}|_{K_m}$. We remark that each $Q_m$ is continuous and
$K_m$ is locally regular, hence, each of  $V_{K_m,Q_{m}}$ is continuous by Proposition 2.13 of \cite{SiciakExtremal1}. Since each weight function and the
weighted extremal function are radial,   we can write  $V_{K_m,Q_{m}}(z)=V_{K_m,Q_{m}}(|z|)$.  We will show that
$$V_{K_m,Q_{m}}(z)=\left\{ \begin{array}{ll}
          \frac{1}{2}V_{K_{m-1},Q_{m-1}}(z), & \hbox{ if } |z|\leq r_m; \\
         \log|z|+A_m , & \hbox{ if } |z|>r_m,
         \end{array}
       \right.$$
where $A_m= \frac{1}{2}V_{K_{m-1},Q_{m-1}}(r_m)-\log r_m$.
%\max\left(\frac{1}{2}V_{K_{m-1},Q_{m-1}}(z), \log|z|+A_m\right),$$

If the function $v_m$, on the right hand side, is plurisubharmonic then it is maximal outside of $K_m$. In addition, using the fact that $v_m$ equals to $Q_m$ on $K_m$, we have  $V_{K_m,Q_{m}}=v_m$  by the domination principle. Thus, it is enough to show that the function $v_m$  is plurisubharmonic.

For $m=2$, we have  $\frac{1}{2}V_{K_{1},Q_{1}}=\frac{1}{2}\max(\log|z|,\log r_1)$.
Since
$$\frac{\p \left\{\frac{1}{2} \max(\log|z|,\log r_1)\right\}}{\p n}< \frac{\p \left\{\log|z|+A_2\right\}}{\p n}$$ on $\p B(0,r_2)$,  by  Theorem \ref{GluePshSmooth}, we have that $v_2$ is plurisubharmonic.

For $m>2$, by induction hypothesis we have $V_{K_m,Q_{m}}(z)=\log|z|+ A_{m-1}$ on $B(0,r_m)\setminus B(0,r_{m-1})$. Hence, again by Theorem \ref{GluePshSmooth},  we obtain that $v_m$ is plurisubharmonic.

Next, we show that $\supp (dd^c V_{K_m,Q_m})^n=\bigcup_{i=1}^m \p B(0,r_i)$.

For $m=1$, this is obvious.
For $m\geq2$,
$$V_{K_m,Q_{m}}(z)=\left\{ \begin{array}{ll}
         \frac{1}{2}V_{K_{m-1},Q_{m-1}}, & \hbox{ if } |z|\leq r_m; \\
         \log|z|+A_m , & \hbox{ if } |z|>r_m.
         \end{array}
       \right.$$
Clearly, $V_{K_m,Q_{m}}$ is maximal on $\cn \setminus K_m$. Hence, $\supp (dd^c V_{K_m,Q_m})^n\subset K_m$.
Since $V_{K_m,Q_{m}}(z)=\frac{1}{2}(\log|z|+A_{m-1})$ on $B(0,r_m)\setminus B(0,r_{m-1})$, we have   $$\p B(0,r_m)\subset \supp (dd^c V_{K_m,Q_m})^n.$$
On $B(0,r_m)$, we have $V_{K_m,Q_{m}}= \frac{1}{2}V_{K_{m-1},Q_{m-1}}$. By induction assumption, we have
$$\supp (dd^c V_{K_{m-1},Q_{m-1}})^n= \bigcup_{i=1}^{m-1} \p B(0,r_i).$$ Therefore, we have $\supp (dd^c V_{K_m,Q_m})^n=\bigcup_{i=1}^m \p B(0,r_i)$.

Note that in this example, we can take $K_m=\bigcup_{i=1}^m \p B(0,r_i)$ and  similar arguments will give the same conclusion.
\end{ex}
\begin{comment}
 it is clear. By induction hypothesis

Note that the \mam of $V_{K_{1},Q_{1}}$ is supported on $K_1$. Assume that  \mam of $V_{K_{m-1},Q_{m-1}}$ is supported on $K_{m-1}$. Since

Since each of the plurisubharmonic function defined by $$\max\left(\frac{1}{2}V_{K_{m-1},Q_{m-1}}(z),
\log|z|+A_m\right)$$ is in $L^+$ and equals to $Q_m$ on $K_m$ and the \mam is supported on $K_m$, by domination
principle we have $$V_{K_m,Q_{m}}(z)=\max\left(\frac{1}{2}V_{K_{m-1},Q_{m-1}}(z), \log|z|+A_m\right).$$

We will prove this by induction on $m$.  Let $m=2$. Now we have
$$\max\left(\frac{1}{2}V_{K_{m-1},Q_{m-1}}(z), \log|z|+A_2 \right) =
\max\left(\frac{1}{2}\max(\log|z|,\log r_1), \log|z|+A_2 \right)$$%
which equals to $Q$ on $K_2$ and clearly $(dd^c \max\left(\frac{1}{2}\max(\log|z|,\log r_1), \log|z|+A_2 \right))^n$ is
supported on $K_2$. Hence by domination principle $$V_{K_2,Q_{2}}(z)=\max\left(\frac{1}{2}V_{K_{m-1},Q_{m-1}}(z),
\log|z|+A_2 \right).$$

Induction step: $V_{K_m,Q_{m}}(z)=\max\left(\frac{1}{2}V_{K_{m-1},Q_{m-1}}(z), \log|z|+A_m\right)$.

Now we will show for $m+1$.

We observe that $\max\left(\frac{1}{2}V_{K_{m-1},Q_{m-1}}(z), \log|z|+A_m\right)= \frac{1}{2}(\log|z|+A_{m-1})$

Since
\end{comment}

The next example shows that shells, i.e.,  the difference of two concentric balls,  can be obtained as the support of the \mam of a
weighted extremal function.

\begin{ex}\label{SupportShellMeasure} Let $K=\overline{B(0,R)} \setminus B(0,r)$ where $r<R$.
Let $$Q(z)= \frac{1}{R-r}\left(|z|-r\log|z|-r+r \log R\right).$$ Later we will verify that
$$V_{K,Q}(z)=\left\{ \begin{array}{ll}
           0, & \hbox{ if } |z|<r; \\
         \frac{1}{R-r}\left(|z|-r\log|z|-r+r \log R\right), & \hbox{ if } r\leq |z| \leq R;\\
         \log|z|-\log R +\frac{1}{R-r}(R-r\log R-r+r \log R), & \hbox{ if } |z|>R
         \end{array}
       \right.
$$
and that the support of the \mam of the weighted extremal function is the shell   $K=\overline{B(0,R)} \setminus B(0,r)$.
\end{ex}

Our next goal is  to show that many radially symmetric compact sets can be obtained as the support of the \mam of a
weighted extremal function $V_{K,Q}^\ast$, for some admissible continuous weight $Q$.

To obtain results on radially symmetric compact sets and in order to verify Example \ref{SupportShellMeasure},   we
recall some facts about radially symmetric plurisubharmonic functions and  the representation of these functions in terms of their Monge-{A}mp\`ere measures   from Persson's thesis
\cite{PerssonSymetric}. We note that this work is not published anywhere else.

\begin{prop}\cite[Proposition 3.1]{PerssonSymetric}
Let $u(z)=\widetilde{u}(\log|z|^2)$ be an upper semicontinuous function. Then $u$ is plurisubharmonic on $B(0,e^R)$ if and
only if $\tilde{u}$ is increasing  on $[-\infty, R)$  and convex on $(-\infty, R)$, for $-\infty< R \leq \infty$.
\end{prop}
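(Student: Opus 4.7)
The plan is to prove the equivalence by reducing both directions to classical one-variable facts via restriction to complex lines.

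For the sufficiency direction, suppose $\tilde u$ is increasing on $[-\infty,R)$ and convex on $(-\infty,R)$. I would fix any complex affine line $\zeta \mapsto a+\zeta b$ with image meeting $B(0,e^R)$ and observe that $g(\zeta):=\log|a+\zeta b|^2$ is subharmonic on $\mathbb{C}$ (being the log-modulus-squared of a holomorphic function), with values in $[-\infty,R)$. The restriction of $u$ to this line is $\tilde u\circ g$, and the classical composition fact that a convex non-decreasing function composed with a subharmonic function is subharmonic yields subharmonicity of $\tilde u\circ g$. Combined with the given upper semicontinuity of $u$, this is the definition of plurisubharmonicity on $B(0,e^R)$.

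For the necessity direction, assume $u$ is \psh on $B(0,e^R)$. Restrict to the complex line $L:=\{(t,0,\dots,0):t\in\mathbb{C}\}$, obtaining $\phi(t):=u(t,0,\dots,0)=\tilde u(2\log|t|)$, which is subharmonic on the disc $\{|t|<e^{R/2}\}$ and radial in $t$. I then invoke the classical characterization of radial subharmonic functions on a disc: writing $\phi(t)=\psi(\log|t|)$ with $\psi(s)=\tilde u(2s)$, subharmonicity of $\phi$ forces $\psi$ to be non-decreasing and convex on $(-\infty,R/2)$, which is exactly the statement that $\tilde u$ is non-decreasing and convex on $(-\infty,R)$. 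Monotonicity extends to $[-\infty,R)$ via $\tilde u(-\infty):=\lim_{s\to -\infty}\tilde u(s)$, a value consistent with $u(0)$ thanks to upper semicontinuity.

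The only substantive step is the one-variable characterization of radial subharmonic functions, which follows from the standard fact that the circular mean $M_u(r)=\frac{1}{2\pi}\int_0^{2\pi} u(re^{i\theta})\,d\theta$ of a subharmonic function is non-decreasing in $r$ and convex in $\log r$; since a radial subharmonic function coincides with its own circular mean, the characterization is immediate. The composition identity used for sufficiency is equally standard, being a one-line application of Jensen's inequality. The remaining work is bookkeeping at the singular point $z=0$, handled entirely by the $-\infty$-value convention on $\tilde u$, so no deeper technical obstacle arises.
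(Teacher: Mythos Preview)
The paper does not supply its own proof of this proposition; it is simply quoted from Persson's thesis, so there is nothing in the paper to compare your argument against. Your approach is the standard one and is correct in outline. Two minor remarks. First, on a general complex line $\zeta\mapsto a+\zeta b$ with $a,b\in\mathbb{C}^n$, the function $g(\zeta)=\log|a+\zeta b|^2=\log\sum_{j}|a_j+\zeta b_j|^2$ is not literally ``the log-modulus-squared of a holomorphic function'' but rather the logarithm of a sum of such; it is still subharmonic, by the classical fact that $\log\sum_j|f_j|^2$ is plurisubharmonic whenever the $f_j$ are holomorphic, so your sufficiency argument goes through unchanged. Second, there is an apparent mismatch of intervals built into the statement itself (if $|z|<e^R$ then $\log|z|^2<2R$, not $R$), which your necessity argument quietly accommodates by working on the smaller disc $\{|t|<e^{R/2}\}$; this is a typo in the formulation rather than a flaw in your reasoning.
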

\begin{comment}Next Lemma gives
\begin{lem}\cite[Lemma 4.6 ]{PerssonSymetric}Let  $u(z)=\widetilde{u}(\log|z|^2)\in \mathcal{C}^2$.
Then the Hessian matrix of $u$ is given by
\begin{equation}
\left[\frac{\p^2 u}{\p z_i\p\bar{z}_j}
\right]=\delta_{ij}\frac{\widetilde{u}'(\log|z|^2)}{|z|^2}+\bar{z}_i z_j
\frac{\widetilde{u}''(\log|z|^2)-\widetilde{u}'(\log|z|^2)}{|z|^4}
\end{equation}
\end{lem}
\end{comment}

Persson  computes the entries of the  complex Hessian matrix of a radially symmetric  plurisubharmonic function $u(z):=
\widetilde{u}(\log|z|^2) \in \mathcal{C}^2$ and obtains
\begin{equation}
\frac{\p^2 u}{\p z_i\p\bar{z}_j}
=\delta_{ij}\frac{\widetilde{u}'(\log|z|^2)}{|z|^2}+\bar{z}_i z_j
\frac{\widetilde{u}''(\log|z|^2)-\widetilde{u}'(\log|z|^2)}{|z|^4}.
\end{equation}
Using linear  algebra, he obtains the \mam of $u$ as
$$(dd^c u)^n = \frac{\left(\widetilde{u}'(\log|z|^2)\right)^{n-1}
\widetilde{u}''(\log|z|^2) }{|z|^{2n}}dV(z),$$
where $dV$ is the standard volume form on \cn.
By direct integration, he obtains the following representation theorem.
\begin{thm}\cite[Corollary 3.4]{PerssonSymetric}\label{PerssonRepresentationTheorem}
Let $u$ be a radial plurisubharmonic function bounded from below,  then we have the
following representation formula.
\begin{equation}
u(z)=u(0)+\int\limits_{0<|\zeta|<|z|}
\frac{|\zeta|^{-2n}}{n\omega_{2n}}\left(\frac{(dd^cu)^n (B(0,|\zeta|))}{4^n
n!\omega_{2n} }\right)^{1/n}dV(\zeta).
\end{equation}
Conversely, if $\mu$ is a radially symmetric nonnegative measure with compact support satisfying
\begin{equation}\label{RadialCOndition}
\int\limits_{B(0,r)} |\zeta|^{-2n}\left(\mu (B(0,|\zeta|)) \right)^{1/n}dV(\zeta)
<\infty ,
\end{equation} then
\begin{equation}
u(z)=\int\limits_{0<|\zeta|<|z|}
\frac{|\zeta|^{-2n}}{n\omega_{2n}}\left(\frac{\mu (B(0,|\zeta|))}{4^n
n!\omega_{2n} }\right)^{1/n}dV(\zeta)
\end{equation}
defines a  radially symmetric plurisubharmonic function bounded from below and $(dd^cu)^n =\mu$.
\end{thm}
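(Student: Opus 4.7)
My plan is to handle both directions together via an explicit calculation in the smooth case, then pass to limits by approximation.

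Write $u(z)=\tilde u(t)$ with $t=\log|z|^2$; by the proposition cited just above the theorem, $\tilde u$ is increasing and convex, and boundedness from below of $u$ is equivalent to $\tilde u(-\infty)=u(0)$ being finite. Assume first that $u$ is smooth on $\cn$. Starting from the Hessian formula recalled in the excerpt, the determinant of the matrix $a\,I+b(\bar z\otimes z)$ with $a=\tilde u'(t)/|z|^2$ and $b=(\tilde u''(t)-\tilde u'(t))/|z|^4$ equals $(\tilde u')^{n-1}\tilde u''/|z|^{2n}$, so after tracking the normalization $(dd^cu)^n=4^n n!\det(\partial^2 u/\partial z_j\partial\bar z_k)\,dV$ one obtains
\begin{equation*}
(dd^cu)^n=4^n n!\,\frac{(\tilde u'(\log|z|^2))^{n-1}\tilde u''(\log|z|^2)}{|z|^{2n}}\,dV.
\end{equation*}
Integrating over $B(0,r)$ in polar coordinates with surface area $2n\omega_{2n}$ and substituting $s=\log t^2$ collapses the integral to $\tfrac{1}{n}(\tilde u'(s))^n\big|_{-\infty}^{\log r^2}$; since $\tilde u$ is increasing and bounded below we have $\tilde u'(-\infty)=0$, giving the clean identity
\begin{equation*}
\left(\frac{(dd^cu)^n(B(0,r))}{4^n n!\,\omega_{2n}}\right)^{1/n}=\tilde u'(\log r^2).
\end{equation*}

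Plug this into the right-hand side of the representation formula. Using polar coordinates on $B(0,|z|)$ and the same substitution $s=\log t^2$, the right-hand side becomes $\int_{-\infty}^{\log|z|^2}\tilde u'(s)\,ds=\tilde u(\log|z|^2)-\tilde u(-\infty)=u(z)-u(0)$, by the fundamental theorem of calculus. This establishes the smooth case.

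For a general radial psh $u$ bounded below, regularize by a radial smoothing $u_\varepsilon=u\ast\chi_\varepsilon$, which decreases to $u$, is smooth, radial, psh, and bounded below by the same constant. The smooth case applied to $u_\varepsilon$ gives the formula for each $\varepsilon$. Pass to $\varepsilon\to 0$ using: (i) pointwise monotone convergence $u_\varepsilon(z)\searrow u(z)$ and $u_\varepsilon(0)\searrow u(0)$; (ii) weak convergence $(dd^cu_\varepsilon)^n\to(dd^cu)^n$ on $\cn$ (Bedford--Taylor, since $u$ is locally bounded); (iii) the fact that, by radial symmetry and Fubini, the set of radii $r$ for which $(dd^cu)^n(\partial B(0,r))>0$ is countable, so $(dd^cu_\varepsilon)^n(B(0,r))\to(dd^cu)^n(B(0,r))$ for a.e.\ $r$. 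Since the integrand on the right is nondecreasing in $r$ and uniformly controlled (the total mass $(dd^cu_\varepsilon)^n(\cn)$ is finite on compact sets), dominated/monotone convergence under the outer integral yields the formula for $u$.

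For the converse, set $f(r):=(\mu(B(0,r))/(4^nn!\omega_{2n}))^{1/n}$, which is nondecreasing in $r$. A polar-coordinate calculation identifies the prescribed integral with $\tilde u(\log|z|^2)-\tilde u(-\infty)$ where $\tilde u'(s)=f(e^{s/2})$; the integrability hypothesis \eqref{RadialCOndition} is exactly what makes $\tilde u(-\infty)$ finite, and $\tilde u$ is clearly increasing and convex, so by Persson's proposition $u$ is psh and bounded below. To see $(dd^cu)^n=\mu$, approximate $\mu$ by smooth radial measures $\mu_\varepsilon$ (convolution with a radial mollifier) with the same integrability, define $u_\varepsilon$ by the same formula, observe $u_\varepsilon\to u$ locally uniformly, apply the smooth computation $(dd^cu_\varepsilon)^n=\mu_\varepsilon$, and pass to the weak limit. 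The main technical obstacle is the step (iii) above: the quantity $(\mu_\varepsilon(B(0,r)))^{1/n}$ is not linear in $\mu_\varepsilon$, so one must avoid a direct application of weak convergence and instead use that $\mu_\varepsilon(B(0,r))\to\mu(B(0,r))$ for all continuity radii together with monotonicity in $r$ to justify the limit under the outer integral.
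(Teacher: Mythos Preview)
The paper does not actually prove this theorem: it is quoted from Persson's thesis, and the surrounding text only records the Hessian computation, the resulting Monge--Amp\`ere density formula, and the phrase ``by direct integration'' before stating the result. Your argument carries out precisely that direct integration in the smooth case and is correct; the key identity $\bigl((dd^cu)^n(B(0,r))/(4^nn!\,\omega_{2n})\bigr)^{1/n}=\tilde u'(\log r^2)$ and the subsequent recovery of $u$ by integrating $\tilde u'$ are exactly what the paper is alluding to, and they also yield the one-dimensional formula~\eqref{RadialMeasurePsh} that the paper uses afterwards.

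Your regularization step for the non-smooth forward direction is a genuine addition beyond what the paper sketches, and it is sound. For the converse, your approximation of $\mu$ works, but note there is a shorter route once the forward direction is established: having shown $u$ is radial, psh, and bounded below, apply the forward identity to $u$ itself to get $(dd^cu)^n(B(0,r))=4^nn!\,\omega_{2n}\,(\tilde u'(\log r^2))^n$, and by construction $\tilde u'(\log r^2)=\bigl(\mu(B(0,r))/(4^nn!\,\omega_{2n})\bigr)^{1/n}$; hence $(dd^cu)^n(B(0,r))=\mu(B(0,r))$ for every continuity radius $r$, which by radial symmetry identifies the two measures. This avoids mollifying $\mu$ and the nonlinearity issue you flagged in your step~(iii).
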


As a by-product of the proof of Theorem 3.3 of \cite{PerssonSymetric}, we   get the following one dimensional integral
representation  for radially symmetric plurisubharmonic functions which are bounded from below \cite[page
15]{PerssonSymetric}.
\begin{equation}\label{RadialMeasurePsh}
 u(z)=  u(0)+ \int\limits_0^{|z|} \frac{2}{t}\left(\frac{\mu(B(0,t))}{4^n n!
 \omega_{2n}}\right)^{1/n}dt.
\end{equation}
Given a radially symmetric compact set $K$, we are going to construct an appropriate positive radially  symmetric measure
whose support is $K$ and which satisfies \eqref{RadialCOndition}. Then we will use  the representation \eqref{RadialMeasurePsh}
to construct a radially plurisubharmonic function in the Lelong class $L^+$ with \mam supported on $K$.

Given a radially symmetric $\mu$ satisfying \eqref{RadialCOndition}, we make the following observations. Here, we use the notation $f(t):=\mu(B(0,t))$.
\begin{enumerate}
\item $f$ is a nondecreasing function of $t$.
\item $f(t)$  is constant for $t>T_0$ for some $T_0>0$ if and only if $\mu$ has compact support.
\item Let $\mu$  be a  measure  with compact support, i.e.,  $f(t)$  is constant for $t>T_0$, then
$u\in {L^+}$ if and only if $2\left(\frac{\mu(B(0,T_0))}{4^n n! \omega_{2n}}\right)^{1/n}=1$,
 i.e.,  $\mu(B(0,T_0))=(2\pi)^n$. This condition  comes from  equation \eqref{RadialMeasurePsh}  and it is
 a necessary condition for $u$ to be in ${L^+}$.
\end{enumerate}

As an immediate application of the above formula, we verify   Example \ref{SupportShellMeasure}, where we obtain  shells
 as support of the weighted equilibrium measure.

 We define the following radial measure
$$f(t):=\mu(B(0,t))=\left\{ \begin{array}{ll}
           0, & \hbox{ if } t<r; \\
         \left(\frac{2\pi}{R-r}(t-r)\right)^n, & \hbox{ if } r<t<R;\\
         (2\pi)^n , & \hbox{ if } t>R.
         \end{array}
       \right.
$$
Assuming $u(0)=0$, we obtain the corresponding function defined  by \eqref{RadialMeasurePsh} as:
$$u(z)=\left\{ \begin{array}{ll}
           0, & \hbox{ if } |z|<r; \\
         \frac{1}{R-r}\left(|z|-r\log|z|-r+r \log R\right), & \hbox{ if } r<|z|<R;\\
         \log|z|-\log R +\frac{1}{R-r}(R-r\log R-r+r \log R), & \hbox{ if } |z|>R,
         \end{array}
       \right.
$$
which is a continuous plurisubharmonic function. Note that $u$ is maximal off $K$ and
$u\in{L}^+$. Since   $Q=u|_K$, we obtain $V_{K,Q}=u$.

In fact, we get a similar construction by  taking any continuous nondecreasing function $f(t)$ of the form
\begin{equation}\label{AnnuliMeasure}f(t):=\mu(B(0,t))=\left\{ \begin{array}{ll}
           0, & \hbox{ if } t<r; \\
         g(t), & \hbox{ if } r<t<R;\\
         (2\pi)^n , & \hbox{ if } t>R,
         \end{array}
       \right.
\end{equation}
where $g(t)$  is a strictly  increasing function such that $g(r)=0$ and
$g(R)=(2\pi)^n$.

We remark that we can obtain the case of  $m=1$ in Example \ref{Spheres} by   using a measure $\mu$ of the form
\begin{equation}\label{SphereMeasure}
\mu(B(0,t))=\left\{ \begin{array}{ll}
           0, & \hbox{ if } t<r_1; \\
                  (2\pi)^n , & \hbox{ if } t\geq r_1.
         \end{array}
       \right.
\end{equation}
Thus, by \eqref{RadialMeasurePsh}, we obtain $V_{K,Q}(z)=\max\{0, \log |z|-\log r_1\}$.

A similar construction works with a compact set  $K$ which is a countable  union of spheres and closed  shells.
Let $K:=\bigcup\limits_{i=1}^\infty K_i $ where each $K_i$ is either a sphere or an annulus. We define a measure $\mu:=
\sum\limits_{i=1}^\infty \frac{\mu_i}{2^i}$ where each $\mu_i$ is supported on each $K_i$. If $K_i$ is a sphere, then
$\mu_i$ is of the form \eqref{SphereMeasure} and if $K_i$ is an annulus, then  $\mu_i$ is of the form \eqref{AnnuliMeasure}.  By using \eqref{RadialMeasurePsh} we obtain a \psh function $u\in L^+$ such that the support of the \mam of $u$ is $K$. Hence,
by defining $Q=u|_K$, we obtain the desired result.

\section{Open Problems}
In this section, we list some open problems related to supports of extremal measures in connection with supports of Monge-{A}mp\`ere measures of plurisubharmonic functions in Lelong class.

\begin{oprob}
Under which conditions weighted extremal measures $(dd^c V_{K,Q}^\ast)^n$ and $(dd^c U_{K,Q,\om}^\ast)^n$ are mutually absolutely continuous? In particular, if their supports are equal, are they mutually absolutely continuous?
\end{oprob}

Note that  Example \ref{Example1} and Example \ref{Example2} show that continuous weights and plurisubharmonic weights are not enough to have mutual absolute continuity of the extremal functions. The conclusion holds for constant weights obviously.

\begin{oprob}
Find conditions on $K$ and $Q$ such that $V_{K,Q}=Q$ on $K$. Note that in this case $Q$ should be continuous plurisubharmonic in the interior of $K$.
\end{oprob}

\begin{oprob}
Under which conditions a compact set $K\subset \cn$ can be written as the support of a weighted extremal function? Namely, which sets are the support of the \mam of a continuous plurisubharmonic function in Lelong class, $L^+$?
\end{oprob}

Note that, in this case the set $K$ cannot be pluripolar at any point.

\begin{oprob}
Which (compactly supported) measures  are the Monge-{A}mp\`ere measures of continuous plurisubharmonic functions in Lelong class?
%In particular, which (compactly supported) measures  are the Monge-{A}mp\`ere measures of continuous plurisubharmonic functions in Lelong class?
\end{oprob}

The relation between this problem and the previous one is: If there exists $u\in L^+ \cap \mathcal{C}(\cn)$ such that $\supp (dd^c u)^n=K$ then we can define $Q=u|_K$which is an admissible weight and by the domination principle $V_{K,Q}=u$.

This question is equivalent to finding the range of $T: L^+ \cap \mathcal{C}(\cn) \to \mathcal{PL}(\cn)$, where  $T(u)=(dd^c u)^n$ and $\mathcal{PL}(\cn) $ is the set of Borel measures with compact support and total mass $(2\pi)^n$. In this case if $\mu$ is in the range of $T$, then $\mu$ has no mass on pluripolar sets.

Note that Guedj and Zeriahi showed that if $\mu$ is a measure which puts no mass on pluripolar sets and has total mass $(2\pi)^n$, then there exists $u\in L^+$ such that $(dd^c u)^n=\mu$. See \cite{GuedjZeriahi}.

Also note that for $n=1$, Arsove answered this question in terms of density of the measure. However, again the notion of  logarithmic potentials is used. See \cite{Arsove} for details.

\textbf{Acknowledgements.}{ I thank Professor Norman Levenberg for his excellent support and help.}

\bibliographystyle{amsplain}

\providecommand{\bysame}{\leavevmode\hbox to3em{\hrulefill}\thinspace}
\providecommand{\MR}{\relax\ifhmode\unskip\space\fi MR }
% \MRhref is called by the amsart/book/proc definition of \MR.
\providecommand{\MRhref}[2]{%
  \href{http://www.ams.org/mathscinet-getitem?mr=#1}{#2}
}
\providecommand{\href}[2]{#2}

\end{document}